\documentclass[12pt]{article}
\pdfoutput=1%ensure arxiv uses pdflatex
\usepackage[margin=1in]{geometry}
\usepackage{parskip}
\title{Lattice Minors and Eulerian Posets}
\date{\small University of Minnesota\\ \tt gust1020@umn.edu}
\author{William Gustafson}
\usepackage{amsfonts, amsmath, amssymb, amsthm, tikz}
\usepackage{hyperref}
\usepackage{subcaption}
\usepackage{cleveref}

\DeclareMathOperator{\M}{M}
\newcommand{\mcE}{\mathcal{E}}

\DeclareMathOperator{\irr}{irr}

\DeclareMathOperator{\rk}{rk}
\DeclareMathOperator{\pyr}{Pyr}
\DeclareMathOperator{\prism}{Prism}
\DeclareMathOperator{\link}{link}
\DeclareMathOperator{\zip}{zip}
\newcommand{\zerohat}{{\widehat{0}}}
\newcommand{\onehat}{{\widehat{1}}}
\newcommand{\genlatt}{generator-enriched lattice}
\newcommand{\genlatts}{generator-enriched lattices}

\newcommand{\Genlatts}{Generator-enriched lattices}

\newcommand{\wout}{\setminus}
\newcommand{\cv}{\textup{\textbf{c}}}
\newcommand{\dv}{\textup{\textbf{d}}}
\newcommand{\join}{\vee}

\newcommand{\bigjoin}{\bigvee}

\newcommand{\abs}[1]{{\lvert{#1}\rvert}}
\newcommand{\mcA}{\mathcal{A}}
\newtheorem{theorem}{Theorem}[section]

\newtheorem{lemma}[theorem]{Lemma}
\newtheorem{proposition}[theorem]{Proposition}
\newtheorem{corollary}[theorem]{Corollary}
\newtheorem{conjecture}[theorem]{Conjecture}
\theoremstyle{definition}
\newtheorem{definition}[theorem]{Definition}

\newtheorem{example}[theorem]{Example}
\newtheorem{remark}[theorem]{Remark}

\newcommand{\eqlabel}[1]{\textup{(\refstepcounter{equation}\label{#1}\theequation)}}
\numberwithin{equation}{section}
\emergencystretch 2em
\relpenalty=10000
\binoppenalty=10000
\begin{document}

	\maketitle

\begin{abstract}
	We introduce posets of simple vertex labeled minors of graphs
	and a generalization to the level of polymatroids, collectively termed minor
	posets.
	We show that any minor poset is isomorphic
	to the face poset of a regular CW sphere, and in particular,
	is Eulerian. We establish \cv\dv-index inequalities induced
	by strong maps, a tight upper bound for \cv\dv-indices of minor
	posets and a tight lower bound for \cv\dv-indices of minor
	posets arising from lattices of maximal length. 
\end{abstract}

\section{Introduction}
Consider a finite graph with labeled vertices and unlabeled edges.
There is a natural partial
ordering on the simple minors in which the cover relations correspond
to deletions and contractions. \Cref{3-cycle minor poset fig} shows the
minor poset of the cycle of length three with an artificial
minimum $\zerohat$ adjoined.

\begin{figure}
	\centering
	\includegraphics[height=0.4\textheight]{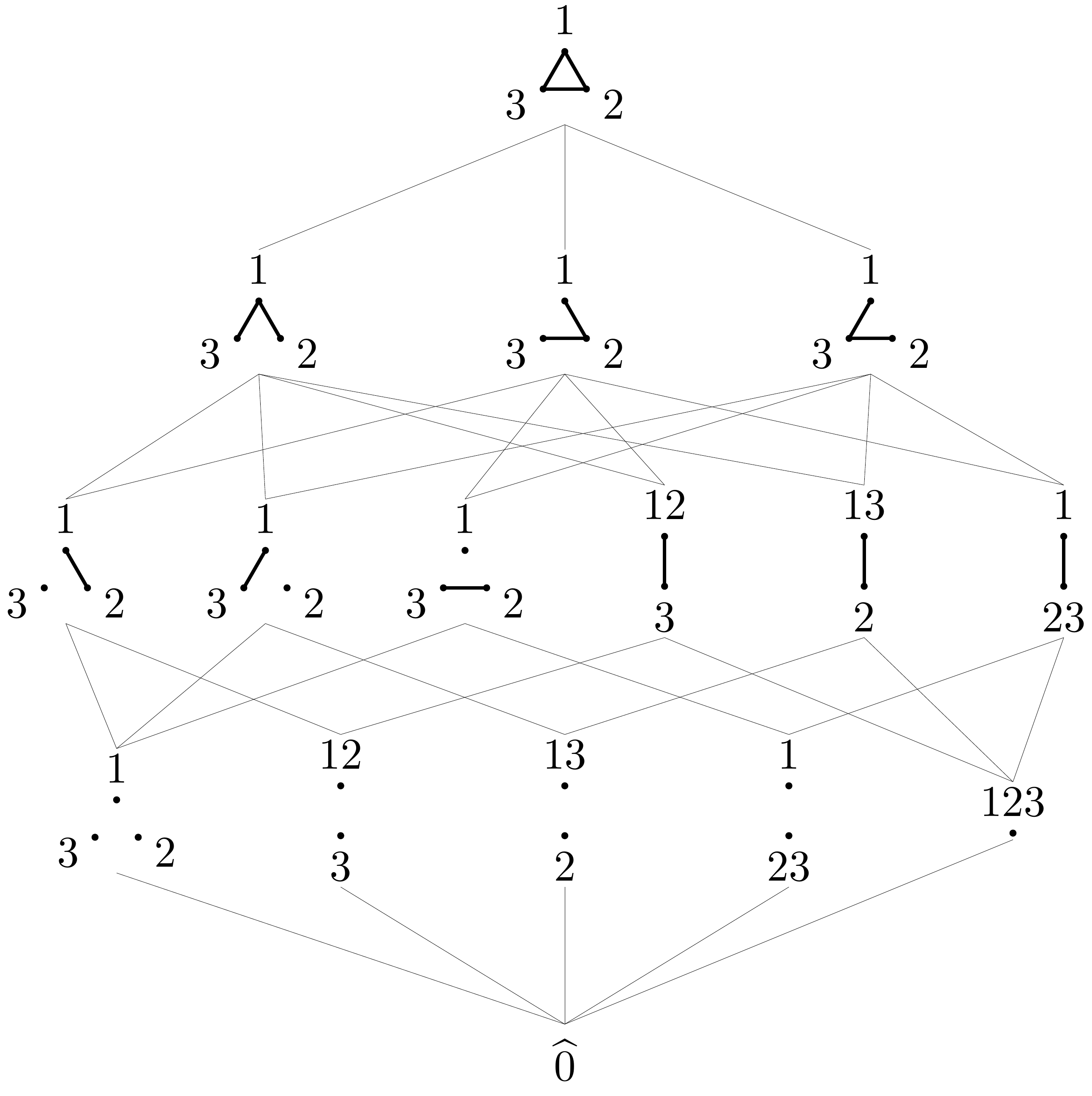}
	\caption{The poset of simple vertex labeled minors of a cycle
	of length three.}
	\label{3-cycle minor poset fig}
\end{figure}

One place where minor posets of graphs show up naturally is as lower
intervals in the uncrossing
poset. This is a partial order on pairings where cover relations correspond
to resolving crossings. This partial order is intimately related with
minors of graphs. Initial interest in the uncrossing poset stemmed from
its relation to a stratification of a space of electrical
networks \cite{electroids}. Each pairing in the poset corresponds to an
equivalence class of graphs and the
relations correspond to either deletions or contractions of edges. In the
case of a circular planar graph -- a graph that can be embedded into the
disc in the plane with all vertices on
the boundary of the disc -- the simple vertex labeled minors may be used
as representatives and the minor poset is isomorphic to a lower interval
in the uncrossing poset. For interested readers the details of
this correspondence appear
in~\cite[Section 1.2.1, Proposition 2.4.5]{thesis}.

Hersh and Kenyon used a lexicographic shelling to show the uncrossing
poset is isomorphic
to the face poset of a regular CW complex in [12, Corollary 3.19]. Thus,
the minor poset of a
circular planar graph is isomorphic to the face poset of a regular CW
sphere.

In this paper we introduce the minor poset of a {\genlatt}. This
is an extension of minor posets of graphs to the level of polymatroids.
Polymatroids are a generalization of matroids for which the rank function
may take on non-integral values \cite{edmonds,herzog-hibi-02}.
{\Genlatts} correspond to closure operators
of simple polymatroids just as geometric lattices correspond to simple matroids;
see \cite[Theorem 2.7]{latticeMinorsAndPolymatroids}. From this relationship
there is a notion of minors for {\genlatts}. In the case of the lattice
of flats of a graph, lattice minors correspond to the simple vertex labeled minors of the graph.

Our main result is a collapsing construction that shows any minor poset is
isomorphic to the face poset of a CW sphere. In particular,
minor posets form a new class of Eulerian posets that are combinatorially
and algebraically motivated. We also study the flag vectors of minor posets
through the lens of the \cv\dv-index; a combinatorial invariant for Eulerian
posets that encodes the flag vectors with all linear redundancies removed.

Although this paper is self-contained, interested readers may wish to
consult \cite{latticeMinorsAndPolymatroids} for the connection between
{\genlatts} and polymatroids. Briefly, a {\genlatt} encodes the closure
operator of a polymatroid in the same way a geometric lattice
encodes the closure operator of a matroid. In the case of graphs, minors
of the lattice of flats are in bijection with simple graph minors when we
consider vertices to be labeled and edges unlabeled.

In \Cref{lattice minors section} we review the notions of {\genlatts} and minors introduced in
\cite{latticeMinorsAndPolymatroids}. In \Cref{strong minor poset section} we introduce the minor
poset of a {\genlatt}. \Cref{factoring section} describes a process to factor strong
surjections used for the collapsing construction. In
\Cref{CW sphere section} we prove our main result, the aforementioned collapsing construction for minor
posets. In \Cref{inequality section} we deduce inequalities for
\cv\dv-indices of minor
posets from the collapsing construction. Appendix A contains an overview
of the zipping operation used in \Cref{CW sphere section} as well as
background concerning the \cv\dv-index.
\section{Lattice minors}
\label{lattice minors section}

Here we review some definitions and results concerning {\genlatts} which were introduced
in \cite{latticeMinorsAndPolymatroids}. In particular, we define the deletion and contraction
operations of {\genlatts} that are analogous to the same operations on simple vertex labeled graphs.

\begin{definition}
	A \emph{\genlatt} is a pair~$(L,G)$ such that~$L$
	is a finite lattice
	and~$G\subseteq L\setminus\{\widehat{0}\}$ generates the lattice~$L$ via the join operation.
\end{definition}

Given a {\genlatt}~$(L,G)$, the elements of~$G$
will be referred to as
\emph{generators} of~$(L,G)$.
Necessarily,~$G$ includes the set of join
irreducibles of~$L$, which we denote by~$\irr(L)$.
When~$G=\irr(L)$
the {\genlatt}~$(L,G)$ is said to be \emph{minimally generated}.
Note a {\genlatt} may have an empty generating set. This
is the trivial case where the lattice only consists of the minimum.
Throughout, we consider the empty join to be equal to the minimum of a lattice, which is the identity of the join operation.

Given a lattice~$L$, let~$H\subseteq L\setminus\{\widehat{0}\}$ and let~$z\in L$ be an element
such that~$z<h$ for all~$h\in H$. Define the \emph{{\genlatt} with generating
set~$H$ and minimum~$z$} to be
\begin{align}
\label{span notation}
\langle H|z\rangle&=\left(\left\{z\vee\bigjoin_{x\in X}x:X\subseteq H\right\},H\right)
\end{align}

\begin{figure}
	\centering
	\hfill
	\begin{subfigure}[t]{0.4\textwidth}
		\centering
		\includegraphics{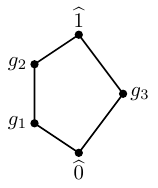}
		\caption{The Hasse diagram of a lattice~$L$.}
	\end{subfigure}
	\hfill
	\begin{subfigure}[t]{0.4\textwidth}
		\centering
		\includegraphics{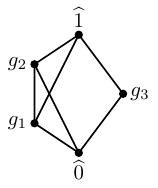}
		\caption{The diagram of a {\genlatt} $(L,\{g_1,g_2,g_3\})$.}
	\end{subfigure}
	\hfill
	\caption{}
	\label{fig cayleys}
\end{figure}

When listing~$H$ explicitly usually the set brackets will be omitted.

In order to depict {\genlatts}, we add extra edges to the Hasse diagram of the given lattice.

\begin{definition}
	The \emph{diagram} of a {\genlatt}~$(L,G)$ is the simple directed graph with vertex set~$L$
	and with edges~$(\ell,\ell\join g)$ for~$\ell\in L$ and~$g\in G$ such that~$g\not\le\ell$.
\end{definition}

Just as in the case of Hasse diagrams edges in diagrams of {\genlatts}
will not be explicitly directed. It is understood all edges
are directed upwards.
Figure~\ref{fig cayleys} depicts an example of the diagram of a {\genlatt} and \Cref{10_genlatts} depicts
the ten {\genlatts} with~3 generators considered up to isomorphism.
The diagram of a {\genlatt} determines the {\genlatt} up to isomorphism. The lattice
is isomorphic to the poset of vertices of the diagram with the order relation~$\ell\le k$ when
there is a directed path from~$\ell$ to~$k$. In particular, the minimum of the lattice
is the unique source vertex. The generating set consists of all vertices incident to the
minimum.

Strong maps between matroids were introduced independently
by Crapo in \cite[Section 2]{crapo-strongmaps} and Higgs
in \cite[pp. 1]{higgs}. We generalize this concept below to
{\genlatts}, the classical case is that of strong maps between
minimally generated geometric lattices.

\begin{definition}
	Let~$(L,G)$ and~$(K,H)$ be two {\genlatts}. A \emph{strong map}
	is a map~$f:L\rightarrow K$
	that is join-preserving
	and satisfies~$f(G)\subseteq H\cup\{\widehat{0}_K\}$.
	We use the notation~$f:(L,G)\rightarrow(K,H)$ to denote a strong map~$f$
	from~$(L,G)$ to~$(K,H)$.
\end{definition}

A strong map~$f:(L,G)\rightarrow(K,H)$ is said to be \emph{injective}
when it is injective as a map on the underlying lattices,
and \emph{surjective} when~$f(G\cup\{\widehat{0}_L\})=H\cup\{\widehat{0}_K\}$.
Two {\genlatts} are said to be \emph{isomorphic} when there is a strong
bijection between them.

Given a finite set~$X$ let~$B_X$ denote the lattice of subsets of~$X$. We will also use the
notation~$B_n$ to denote the lattice of subsets of~$[n]=\{1,\dots,n\}$.
\begin{definition}
\label{canonical strong map def}
	Given a {\genlatt}~$(L,G)$ the \emph{canonical strong map}
	from~$(B_G,\irr(B_G))$ onto~$(L,G)$ is the strong map~$\theta:(B_G,\irr(B_G))
	\rightarrow(L,G)$ defined by
	\[\theta(H)=\bigjoin_{h\in H}h.\]
\end{definition}

\newcommand{\glsize}{0.15}
\newcommand{\glfig}[1]{%
	\begin{subfigure}{\glsize\textwidth}%
		\centering%
		\includegraphics[scale=0.7]{#1.pdf}%
		\caption{}%
		\label{#1}%
	\end{subfigure}%
	}
\begin{figure}
\centering
	\glfig{gl_B_3}
	\hspace{0.5em}
	\glfig{gl_nopara}
	\hspace{0.5em}
	\glfig{gl_otherpara}
	\hspace{0.5em}
	\glfig{gl_V_upper}
	\hspace{0.5em}
	\glfig{gl_V_lower}

	\vspace{\baselineskip}
	\vspace{\baselineskip}
	
	\glfig{gl_C_3}
	\hspace{0.5em}
	\glfig{gl_C1C2}
	\hspace{0.5em}
	\glfig{gl_tam_3}
	\hspace{0.5em}
	\glfig{gl_U_2_3}
	\hspace{0.5em}
	\glfig{gl_B_2}
\caption{The diagrams of the~10 {\genlatts} with~3 generators.}
\label{10_genlatts}
\end{figure}

Now we define the deletion and contraction operations for {\genlatts}.
These will be central concepts throughout.

\begin{definition}
\label{del-contr def}
	Let~$(L,G)$ be a {\genlatt} and let~$I\subseteq G$.
	The \emph{deletion of~$(L,G)$ by~$I$} is the {\genlatt}
	\begin{align*}
		(L,G)\wout I=\langle G\wout I|\zerohat\rangle.
	\end{align*}

	Let~$i_0=\bigjoin_{i\in I}i$.
	The \emph{contraction of~$(L,G)$ by~$I$} is the {\genlatt}
	\begin{align*}
		(L,G)/I=\langle \{g\join i_0:g\in G\}\wout\{i_0\}|i_0\rangle.
	\end{align*}
\end{definition}

For notational convenience we also define
the \emph{restriction of~$(L,G)$ to~$I$} to be the {\genlatt}
\begin{align*}
	(L,G)|_I=(L,G)\wout(G\wout I)=\langle I|\zerohat\rangle.
\end{align*}

For~$\ell\in L$ let~$G_{\le\ell}=\{g\in G:g\le\ell\}$. For~$I\subseteq G$
setting~$i_0=\bigjoin_{i\in I}i_0$, as above, the contraction~$(L,G)/I$
is the same as~$(L,G)/G_{\le i_0}$ since the join over~$I$ and~$G_{\le i_0}$ are
the same. Thus, contractions of~$(L,G)$ are indexed by~$L$.

\begin{definition}
	Given a {\genlatt} $(L,G)$ a \emph{minor} is any {\genlatt}
	obtained from $(L,G)$ via a sequence of deletion and contraction
	operations.
\end{definition}

We consider~$(L,G)$
to be a minor of itself.	
See~\Cref{fig square minors} for examples of minors of a {\genlatt}.

\begin{figure}
	\centering
	\includegraphics{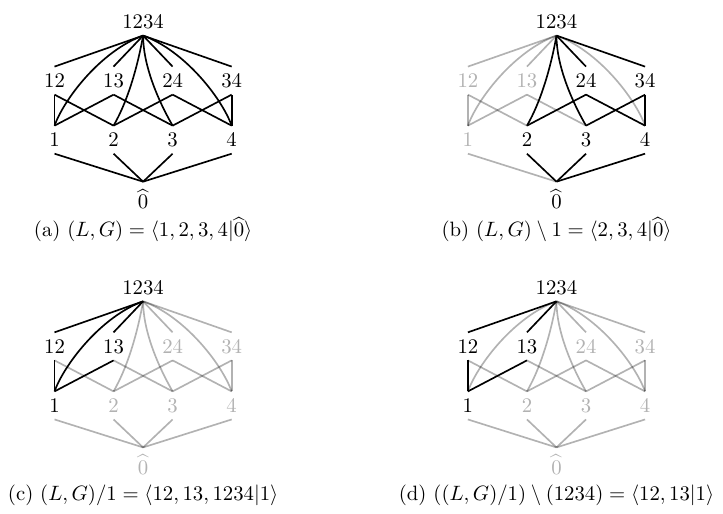}
	\caption{
		In (a) is the diagram of the face lattice of the square
		as a minimally generated lattice. In (b)-(d) are embedded diagrams
		of several minors.
	}
	\label{fig square minors}
\end{figure}

The following two lemmas which appear in
\cite{latticeMinorsAndPolymatroids}
will be used extensively
in the next section. We provide proofs here for completeness.
The use of \Cref{contr then del}
is somewhat ubiquitous and will be used without explicit reference.

\begin{lemma}[{\cite[Lemma 3.6]{latticeMinorsAndPolymatroids}}]
\label{contr then del}
Any minor of a {\genlatt}~$(L,G)$ may be expressed as the result of a
contraction followed by a deletion. Namely, a minor~$(K,H)$ of~$(L,G)$ may be expressed
as
\[(K,H)=((L,G)/G_{\le\zerohat_K})\wout\{g\join\zerohat_K:g\in G\wout G_{\le\zerohat_K},
\ g\join\zerohat_K\not\in H\}.\]
\end{lemma}

	\begin{proof}
		Let~$(K,H)$ be a minor of~$(L,G)$.
		By definition~$(K,H)$ may be expressed as the result of a sequence
		of contractions and deletions. That is, for some possibly
		empty sets of generators~$I_1,J_1,
		\dots,I_r,J_r$, that
		\[(K,H)=((\cdots(((L,G)/I_1)\setminus J_1)\cdots/I_r)\setminus J_r.\]
		For~$1\le j\le r$ let~$i_j$ be the join of all elements
		in~$I_j$. Set~$i_0=i_1\join\cdots\join i_r$.
		By definition of deletion and contraction, the minimal element~$\widehat{0}_K$
		of~$K$ is~$i_0$. Furthermore, the generators of~$(K,H)$ can each
		be expressed as~$g\join i_1\join\cdots\join i_r=g\join i_0$
		for some~$g\in G$. Thus each generator of~$(K,H)$ is
		a generator of~$(L,G)/i_0$, hence~$(K,H)=((L,G)/i_0)|_{H}$.
	\end{proof}

\begin{lemma}[{\cite[Lemma 3.7]{latticeMinorsAndPolymatroids}}]
\label{minor gen sets}
For any {\genlatt}~$(L,G)$ the minors are precisely {\genlatts} of the
form~$\langle\ell\join g_1,\dots,\ell\join g_k|\ell\rangle$
for~$\ell\in L$ and~$\{g_1,\ldots,g_k\}\subseteq G$ such that~$g_j\not\le\ell$
for~$1\le j\le k$.
\end{lemma}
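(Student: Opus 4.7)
The plan is to prove both inclusions directly, using Lemma~\ref{contr then del} for one direction and an explicit construction for the other.

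For the direction that every generator enriched lattice of the stated form is a minor, fix $\ell \in L$ and $g_1, \ldots, g_k \in G$ with $g_j \not\le \ell$. Since $G$ generates $L$, I can choose a subset $I \subseteq G$ with $\bigvee_{i \in I} i = \ell$. Then the contraction $(L, G)/I$ has $i_0 = \ell$, minimum element $\ell$, and generating set $\{g \vee \ell : g \in G\} \setminus \{\ell\} = \{g \vee \ell : g \in G,\ g \not\le \ell\}$. Each $\ell \vee g_j$ lies in this set, so I can follow with a deletion that removes all remaining generators except $\ell \vee g_1, \ldots, \ell \vee g_k$. Unwinding the definitions of deletion and contraction shows the resulting minor equals $\langle \ell \vee g_1, \ldots, \ell \vee g_k \mid \ell\rangle$.

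For the converse, suppose $(K, H)$ is a minor of $(L, G)$ and set $\ell = \widehat{0}_K$. By Lemma~\ref{contr then del}, $(K, H) = ((L, G)/\ell)|_H$. I would first verify that the contraction $(L, G)/\ell$, which uses $I = \{g \in G : g \le \ell\}$, has $i_0 = \ell$: since $G$ generates $L$, $\ell$ is itself the join of some subset of $G$, and every generator in that subset lies in $I$, so $i_0 \ge \ell$; the reverse inequality is immediate. Hence the generating set of $(L, G)/\ell$ is $\{g \vee \ell : g \in G,\ g \not\le \ell\}$, and $H$ is a subset of it by the restriction step. Choosing, for each $h \in H$, some $g_h \in G$ with $g_h \not\le \ell$ and $h = \ell \vee g_h$, one obtains $(K, H) = \langle \ell \vee g_{h_1}, \ldots, \ell \vee g_{h_k} \mid \ell\rangle$ for any enumeration $H = \{h_1, \ldots, h_k\}$.

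The main obstacle is the identity $i_0 = \ell$ in the converse direction, which depends critically on the fact that $G$ generates $L$ via joins. Without this, the contraction $(L, G)/\ell$ could produce a minor with minimum strictly below $\ell$, and the ensuing identification with $\langle\cdots \mid \ell\rangle$ would fail. Once this observation is made, both directions reduce to straightforward bookkeeping against Definitions~\ref{del-contr def}.
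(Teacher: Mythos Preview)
Your proof is correct. The paper does not include its own proof of this lemma---it cites \cite{latticeMinorsAndPolymatroids} (specifically Lemma~3.7 there)---so there is no in-paper argument to compare against. Your approach, using Lemma~\ref{contr then del} for one direction and an explicit contraction-then-restriction for the other, is the natural one and matches how the surrounding results (Lemmas~\ref{contr then del} and~\ref{geometric minors}) are organized; in particular, your verification that $i_0=\ell$ when contracting by $\{g\in G:g\le\ell\}$ is exactly the point that makes the element-indexed contraction notation $(L,G)/\ell$ behave as expected.
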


	\begin{proof}
		Consider a minor~$(K,H)=((L,G)/I)|_J$ of~$(L,G)$,
		where~$I$ and~$J$ are sets of generators.
		Let~$\ell$ be the join of all elements of~$I$
		and let~$J=\{j_1,\dots,j_k\}$.
		By definition
		\[(K,H)=\langle\ell\join j_1,\dots,\ell\join j_k|\ell\rangle.\]
		Conversely, consider a
		{\genlatt}~$(K,H)=\langle\ell\join g_1,\dots,\ell\join g_k|\ell\rangle$
		for some~$\ell\in L$ and~$g_j\in G$ with~$g_j\not\le\ell$ for~$1\le j\le k$.
		The generators of the contraction~$(L,G)/\ell$ are all
		elements~$\ell\join g$ for~$g\in G$ with~$g\not\le\ell$.
		Thus~$\ell\join g_1,\dots,\ell\join g_k$ are generators of~$(L,G)/\ell$, so
		setting~$I=\{\ell\join g_1,\dots,\ell\join g_k\}$
		we have that~$(K,H)=((L,G)/\ell)|_I$.
	\end{proof}

\section{The minor poset}
\label{strong minor poset section}
In this section we introduce the minor poset of a {\genlatt}.
We then establish some basic results needed in \Cref{CW sphere section}.

\begin{definition}
	Let~$(L,G)$ be a {\genlatt}. The minor poset, denoted~$\M(L,G)$, is the
	poset consisting of minors of~$(L,G)$ adjoined with a
	minimal element $\emptyset$.
	The order relation is defined
	by~$(K_1,H_1)\le(K_2,H_2)$
	when~$(K_1,H_1)$ is a minor of~$(K_2,H_2)$.
\end{definition}

Figure~\ref{tam 3 minor poset fig} shows an example of a minor poset.
Below, we gather a few basic observations in a lemma.

\begin{lemma}
	For any {\genlatt} $(L,G)$ the following statements hold
	for the minor poset:
	\begin{itemize}
		\item[(i)]{The maximum of $\M(L,G)$ is $(L,G)$ itself.}
		\item[(ii)]{A lower interval $[\emptyset,(K,H)]$ in
			the minor poset $\M(L,G)$ is the minor poset
			$\M(K,H)$.}
		\item[(iii)]{The atoms of $\M(L,G)$ are the minors
			of $(L,G)$ with no generators, such minors
			correspond to the elements of $L$.}
		\item[(iv)]{The elements of $\M(L,G)$ that cover
			an atom are those with a single generator,
			such minors correspond to the edges of
			the diagram of $(L,G)$.}
	\end{itemize}
\end{lemma}

\begin{figure}
	\centering
	\includegraphics[height=0.45\textheight]{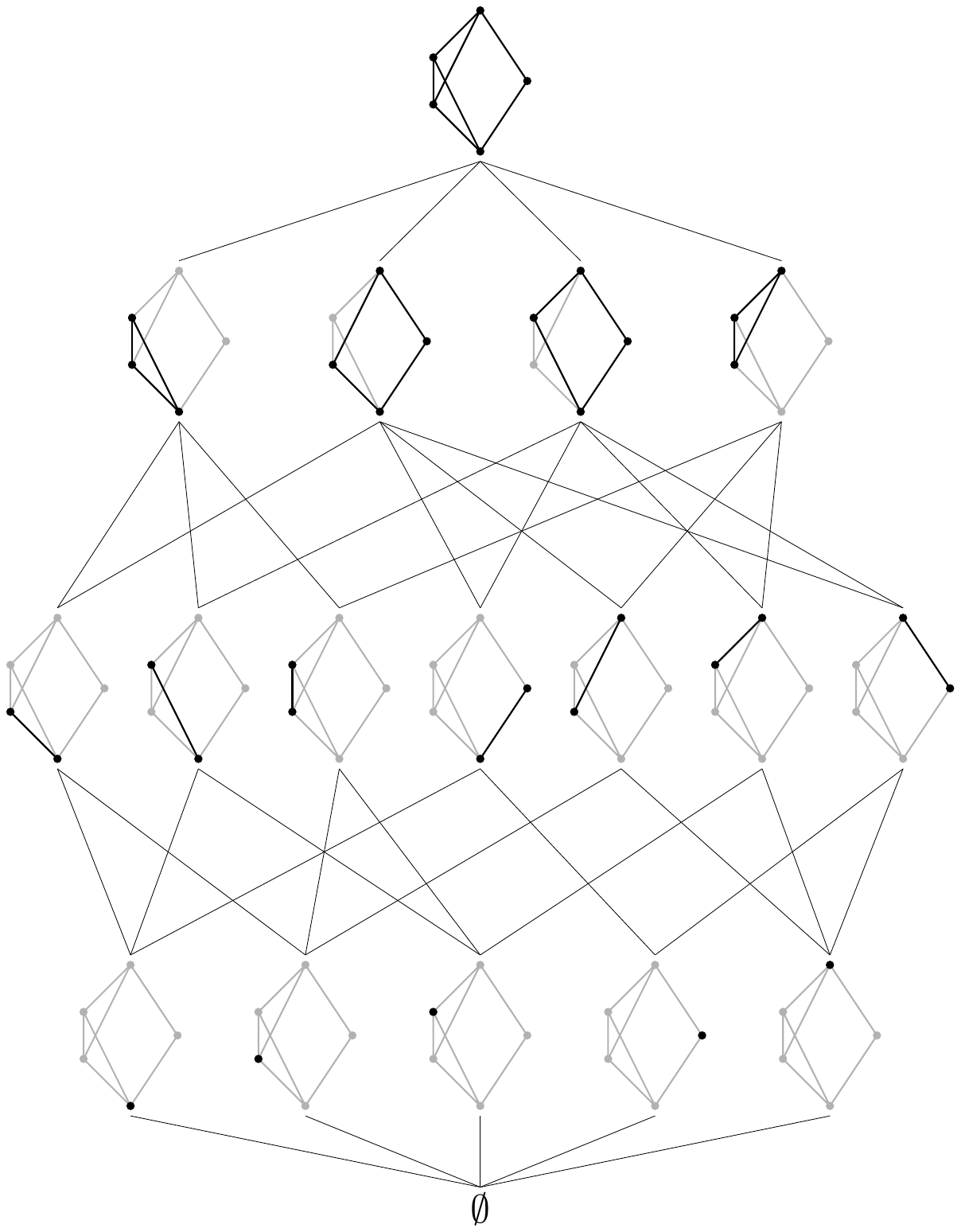}
	\caption{The minor poset of a {\genlatt} that has three generators.}
\label{tam 3 minor poset fig}
\end{figure}

Recall a ranked poset is said to be \emph{thin} if all length~2 intervals
are isomorphic to the Boolean algebra~$B_2$.
The following lemma, whose proof is somewhat technical,
is used to show minor posets are thin and graded.

\begin{lemma}
\label{rk 2 intervals}
	Let~$(L,G)$ be a {\genlatt} and let~$(K_1,H_1)$ and~$(K_2,H_2)$ be minors
	of~$(L,G)$ such that~$(K_1,H_1)<(K_2,H_2)$ in the minor poset~$\M(L,G)$.
	If~$\lvert{H_2}\rvert-\lvert{H_1}\rvert=2$
	then the interval~$[(K_1,H_1),(K_2,H_2)]$ of~$\M(L,G)$
	is isomorphic to the Boolean algebra~$B_2$.
\end{lemma}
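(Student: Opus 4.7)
The plan is a direct analysis enabled by the canonical form from Lemma \ref{contr then del}. Writing $\ell=\widehat{0}_{K_1}$ as an element of $K_2$, one has $(K_1,H_1)=((K_2,H_2)/\ell)|_{H_1}$. Set $T=\{h\in H_2:h\le\ell\}$ and $U=H_2\setminus T$, so $\ell=\bigvee T$, and let $H^*=\{\ell\vee u:u\in U\}$ be the deduplicated generating set of $(K_2,H_2)/\ell$; by construction $H_1\subseteq H^*$. Decompose
\[|H_2|-|H_1|=|T|+(|H^*|-|H_1|)+(|U|-|H^*|)=a+b+c,\]
with $a=|T|$, $b=|H^*|-|H_1|$, and $c=|U|-|H^*|$. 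Since $a=0$ forces $\ell=\widehat{0}_{K_2}$ and hence $c=0$, the constraint $a+b+c=2$ admits exactly four cases: $(a,b,c)\in\{(0,2,0),(1,1,0),(1,0,1),(2,0,0)\}$.

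For each case I would exhibit two candidate atoms and verify via Lemmas \ref{contr then del} and \ref{minor gen sets} that each lies strictly between $(K_1,H_1)$ and $(K_2,H_2)$. In $(0,2,0)$, writing $H_2\setminus H_1=\{h_1,h_2\}$, the atoms are $(K_2,H_2)\setminus h_1$ and $(K_2,H_2)\setminus h_2$. In $(1,1,0)$, writing $T=\{t\}$ and $H^*\setminus H_1=\{\ell\vee u^*\}$, the atoms are the contraction $(K_2,H_2)/t$ and the deletion $(K_2,H_2)\setminus u^*$. In $(1,0,1)$, writing $T=\{t\}$ and letting $\{u,u'\}\subseteq U$ be the unique pair with $\ell\vee u=\ell\vee u'$, the atoms are $(K_2,H_2)\setminus u$ and $(K_2,H_2)\setminus u'$. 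In $(2,0,0)$, writing $T=\{t_1,t_2\}$, when $t_1$ and $t_2$ are incomparable the atoms are $(K_2,H_2)/t_1$ and $(K_2,H_2)/t_2$; when $t_1<t_2=\ell$, the contraction $(K_2,H_2)/t_2$ already collapses to $(K_1,H_1)$, and the second atom becomes the deletion $(K_2,H_2)\setminus t_1$. In every case both candidates have exactly $|H_1|+1$ generators and differ either in minimum element or in generating set, so they are pairwise incomparable.

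The main obstacle is to rule out any further intermediate minors. A minor $(K,H)$ in the interval is specified by $\ell'=\widehat{0}_K\in[\widehat{0}_{K_2},\ell]_{K_2}$ together with a subset $H$ of the generators of $(K_2,H_2)/\ell'$, subject to $\ell\in K$ and $H_1\subseteq$ generators of $(K,H)/\ell$. Since every element of $[\widehat{0}_{K_2},\ell]_{K_2}$ has the form $\bigvee X$ for some $X\subseteq T$, this interval contains at most $2^{|T|}\le 4$ distinct values of $\ell'$, and for each one it is straightforward to check which $H$'s satisfy both constraints while keeping $(K,H)\ne(K_1,H_1),(K_2,H_2)$. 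The trickiest situation is the subcase of $(2,0,0)$ with $t_1<t_2$: the apparently symmetric candidate $(K_2,H_2)/t_2$ coincides with $(K_1,H_1)$ since contracting by $t_2$ automatically absorbs $t_1$, and one must verify separately that the replacement $(K_2,H_2)\setminus t_1$ admits no minor strictly between it and $(K_1,H_1)$.
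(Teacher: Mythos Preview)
Your argument is correct and follows essentially the same route as the paper: express $(K_1,H_1)$ as a contraction of $(K_2,H_2)$ followed by a deletion, and then do a case analysis on how the drop of two generators is distributed between the contraction step and the deletion step. Your $(a,b,c)$ bookkeeping is a tidy reparametrization of the same five situations the paper treats (the paper cases on $(|I|,|J|)$ rather than on $(|T|,|H^*\setminus H_1|,|U|-|H^*|)$, and so has to reduce the non-canonical choice $I=\{i\}$ with some $i'<i$ to the two-element case, which your canonical form via Lemma~\ref{contr then del} avoids). The exclusion of further intermediate minors that you sketch in the last paragraph is exactly the content the paper checks case by case, and your observation that $\widehat{0}_K$ is forced to be a join of a subset of $T$ is the right lever for making each case finite and small.
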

	\begin{proof}
		The minor~$(K_1,H_1)$ may be presented as $((K_2,H_2)/I)\setminus J$ for
		some sets of generators~$I$ and~$J$. Proceed by
		considering the different possibilities for~$I$ and~$J$.
		When~$I$ is empty the set~$J$ must contain two elements,
		say~$j_1$ and~$j_2$. Then~$(K_1,H_1)=(K_2,H_2)\setminus\{j_1,j_2\}$
		and the open interval~$((K_1,H_1),(K_2,H_2))$ consists of the two
		minors~$(K_2,H_2)\setminus\{j_1\}$ and~$(K_2,H_2)\setminus\{j_2\}$.
		Since these two minors are incomparable the closed interval\[[(K_1,H_1),
		(K_2,H_2)]\] is isomorphic to~$B_2$.

		Now consider the case where~$J$ is empty. In this case~$I$
		may consist of either one element or two elements.
		First, suppose~$I=\{i_1,i_2\}$. 
		As a further subcase assume~$i_1\not<i_2$
		and~$i_1\not>i_2$. Since by
		assumption~$\lvert{H_2}\rvert-\lvert{H_1}\rvert=2$,
		every generator~$j$ of~$(K_1,H_1)$ corresponds to a unique generator~$i$
		of~$(K_2,H_2)$ such that~$i\join i_1\join i_2=j$. Due to this uniqueness
		no deletion of~$(K_2,H_2)$ has~$(K_1,H_1)$
		as a minor. If~$(K,H)=(K_2,H_2)\setminus\{j'\}$
		then~$j'\join i_1\join i_2$ is not
		a generator of~$(K,H)/\{i_1,i_2\}$ but is a generator of~$(K_1,H_1)$.
		By similar reasoning no contraction of~$(K_2,H_2)$
		other than~$(K_2,H_2)/\{i_1\}$ and~$(K_2,H_2)/\{i_2\}$
		(as well as~$(K_1,H_1)$ itself)
		has~$(K_1,H_1)$ as a minor.
		This establishes
		that the open interval~$((K_1,H_1),(K_2,H_2))$ consists solely of the
		minors~$(K_2,H_2)/\{i_1\}$ and~$(K_2,H_2)/\{i_2\}$, which are incomparable.

		Now return to the case~$I=\{i_1,i_2\}$ and
		suppose~$i_1<i_2$. In this case~$i_1\join i_2=i_2$
		so ~$(K_2,H_2)/\{i_1,i_2\}=(K_2,H_2)/\{i_2\}$.
		By a similar argument as used in the previous subcase,
		each generator~$j$ of~$(K_1,H_1)$ corresponds to a unique
		generator~$j'$ of~$(K_2,H_2)$ such that~$j'\join i_2=j$.
		Due to this uniqueness no deletion
		of~$(K_2,H_2)$ other than~$(K_2,H_2)\setminus\{i_1\}$
		contains~$(K_1,H_1)$ as a minor.
		Similarly, no contraction of~$(K_2,H_2)$
		with the exceptions of~$(K_2,H_2)/\{i_1\}$ and~$(K_1,H_1)$
		itself contain~$(K_1,H_1)$ as a minor. Thus, the open
		interval~$((K_1,H_1),(K_2,H_2))$ consists solely of the
		minors~$(K_2,H_2)/\{i_1\}$ and~$(K_2,H_2)\setminus\{i_1\}$.

		Now suppose~$I=\{i\}$ and~$J=\emptyset$. If there exists a generator~$i'\in H_2$
		such that~$i'<i$ in $K_2$ then we may take~$I=\{i,i'\}$ which falls under
		a previous case.
		Otherwise, there is  one
		generator~$j\in H_1$ that corresponds to two generators~$j_1,j_2\in H_2$,
		while all other generators in~$H_1$ correspond to a unique generator in~$H_2$.
		Thus, in this case\[((K_1,H_1),(K_2,H_2))=\{(K_2,H_2)\setminus\{j_1\},
		(K_2,H_2)\setminus\{j_2\}\}.\]

		What remains is the case where both~$I$
		and~$J$ are singletons. Suppose~$I=\{i\}$ and~$J=\{i\join j\}$
		for some generator~$j$ of~$(K_2,H_2)$.
		Once again since~$\lvert{H_2}\rvert-\lvert{H_1}\rvert=2$, every generator
		of~$(K_1,H_1)$ corresponds to a unique generator of~$(K_2,H_2)$.
		Hence, for all
		generators~$j_1,j_2$ of~$(K_2,H_2)$ the
		joins~$i\join j_1$ and~$i\join j_2$ are distinct,
		and the open interval~$((K_1,H_1),(K_2,H_2))$ consists
		of the minors~$(K_2,H_2)\setminus\{j\}$ and~$(K_2,H_2)/\{i\}$.
	\end{proof}

\begin{lemma}
\label{minor posets are graded and thin}
	For any {\genlatt}~$(L,G)$ the minor poset~$\M(L,G)$
	is graded by~$\rk(K,H)=\lvert{H}\rvert+1$ and is thin.
\end{lemma}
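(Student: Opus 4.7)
The plan is to verify gradedness first and then derive thinness from Lemma~\ref{rk 2 intervals}. Define $\rk(\emptyset)=0$ and $\rk(K,H)=|H|+1$. The saturated chain
\[\emptyset \lessdot (\widehat{0}_K,\emptyset) \lessdot (K|_{\{h_1\}},\{h_1\}) \lessdot \cdots \lessdot (K,H),\]
obtained by restricting $(K,H)$ one generator at a time (after any enumeration $H=\{h_1,\dots,h_n\}$), realizes the proposed rank function, so it suffices to verify that every cover relation bumps the candidate rank by exactly one. The covers of the form $\emptyset \lessdot (\ell,\emptyset)$ are trivial, so the task reduces to showing that any cover $(K_1,H_1)\lessdot(K_2,H_2)$ with both endpoints nonempty satisfies $|H_2|-|H_1|=1$.

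The first substep is strict monotonicity of $|H|$ under strict comparison. By Lemma~\ref{minor gen sets} we may write $(K_1,H_1) = \langle \ell \vee g_1, \dots, \ell \vee g_k \mid \ell \rangle$ for some $\ell \in K_2$ and $\{g_1,\dots,g_k\} \subseteq H_2$ with $g_i \not\le \ell$. If $\ell = \widehat{0}_{K_2}$, then $H_1 \subseteq H_2$ is a proper subset because $(K_1,H_1) \neq (K_2,H_2)$. If $\ell > \widehat{0}_{K_2}$, then since $H_2$ generates $K_2$ the element $\ell$ is a join of a nonempty subset of $H_2$; at least one element of $H_2$ therefore lies below $\ell$, giving $k < |H_2|$ and so $|H_1| \le k < |H_2|$.

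The second substep is the heart of the proof: whenever $|H_2|-|H_1|\ge 2$, I would construct a strict intermediate, ruling out coverness. If $\ell = \widehat{0}_{K_2}$, then $(K_1,H_1) = (K_2,H_2)|_{H_1}$, and any $H$ with $H_1 \subsetneq H \subsetneq H_2$ furnishes the intermediate $(K_2,H_2)|_H$. If $\ell > \widehat{0}_{K_2}$, set $I = \{g \in H_2 : g \le \ell\}$; then $\bigvee I = \ell$ and $(K_2,H_2)/I$ has $(K_1,H_1)$ as a restriction, giving $(K_1,H_1) \le (K_2,H_2)/I < (K_2,H_2)$. If the first inequality is strict we are done. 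Otherwise $(K_1,H_1) = (K_2,H_2)/I$, and I would split on $|I|$. When $|I| \ge 2$, some $g \in I$ satisfies $g < \ell$ (any element of $I$ works when $\ell \notin I$, and any element other than $\ell$ works when $\ell \in I$); the partial contraction $(K_2,H_2)/\{g\}$ then sits strictly between. When $I = \{g\}$ so that $\ell = g$, no element of $H_2 \setminus \{g\}$ can lie below $g$, so the gap $|H_2|-|H_1|\ge 2$ forces distinct $g_1, g_2 \in H_2 \setminus \{g\}$ with $g_1 \vee g = g_2 \vee g$; the deletion $(K_2,H_2) \setminus \{g_1\}$ then serves as the intermediate, since further contracting it by $\{g\}$ still recovers $(K_1,H_1)$ because the missing generator $g_1 \vee g$ reappears via $g_2$.

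The main obstacle is the bookkeeping in this second substep: one must carefully unpack the definitions of contraction and deletion to verify that each proposed intermediate is genuinely strictly comparable to both endpoints in $\M(L,G)$. Concretely, this involves checking that contracting $(K_2,H_2)/\{g\}$ by the images of $I\setminus\{g\}$ reproduces $(K_2,H_2)/I = (K_1,H_1)$, and analogously that $((K_2,H_2)\setminus\{g_1\})/\{g\} = (K_1,H_1)$. Once gradedness is established, thinness is immediate: every length-two interval $[(K_1,H_1),(K_2,H_2)]$ satisfies $|H_2|-|H_1|=2$ by the rank formula, and Lemma~\ref{rk 2 intervals} then identifies it with the Boolean algebra~$B_2$.
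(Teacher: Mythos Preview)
Your argument is correct. The approach differs from the paper's in one substantive way: the paper invokes Lemma~\ref{rk 2 intervals} for \emph{both} gradedness and thinness, arguing that a cover $(K_1,H_1)\prec(K_2,H_2)$ cannot have $|H_2|-|H_1|\ge 2$ because Lemma~\ref{rk 2 intervals} would produce an intermediate; you instead establish gradedness by a direct case analysis, explicitly constructing an intermediate whenever $|H_2|-|H_1|\ge 2$, and reserve Lemma~\ref{rk 2 intervals} solely for thinness. Your route is longer but more self-contained on the gradedness side, and it handles the case $|H_2|-|H_1|>2$ explicitly rather than leaving it implicit (the paper's one-line appeal to Lemma~\ref{rk 2 intervals} literally covers only the gap-two case). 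The bookkeeping you flag as the ``main obstacle'' does check out: in the $|I|\ge 2$ subcase, $((K_2,H_2)/\{g\})/\ell$ recovers $(K_2,H_2)/I$ because $g\le\ell$ collapses the extra join; in the $|I|=1$ subcase, the collision $g_1\vee g=g_2\vee g$ guarantees the deleted generator reappears after contraction.
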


	\begin{proof}
		By Lemma~\ref{rk 2 intervals} whenever~$(K_1,H_1)\prec (K_2,H_2)$
		the difference~$\lvert{H_2}\rvert-\lvert{H_1}\rvert$ must be less than~2.
		Clearly this difference is positive, so it must be equal to~1.
		Furthermore, since every atom of~$\M(L,G)$ has zero generators, every saturated chain
		from the minimum~$\emptyset$ to a minor~$(K,H)$ must have the same
		length, namely~$\lvert{H}\rvert+1$.

		Since the minor poset~$\M(L,G)$ is graded by~$\rk(K,H)=\lvert{H}\rvert+1$,
		it follows from Lemma~\ref{rk 2 intervals} that the poset~$\M(L,G)$ is thin.
	\end{proof}

The following lemmas will assist in establishing the isomorphism types of minor posets
of minimally generated Boolean algebras and chains.

\begin{lemma}
\label{join-irr-fiber-lemma}
	Let $\phi:L\rightarrow K$ be a join-preserving map between finite lattices.
	If $i\in K$ is a join-irreducible then the fiber $\phi^{-1}(i)$ is either empty, the singleton $\{\zerohat_L\}$
	or contains a join-irreducible of $L$.
\end{lemma}

	\begin{proof}
		Take $\ell\in\phi^{-1}(i)\wout\{\zerohat_L\}$ and induct on the number of join-irreducibles
		below $\ell$. If $\ell\not\in\irr(L)$ then $\ell=\ell_1\join\ell_2$
		for some elements such that $\ell\ne\ell_1,\ell_2$. Since $\phi$ is join-preserving
		we have $\phi(\ell_1)\join\phi(\ell_2)=i$. Since $i$ is join-irreducible either
		$\phi(\ell_1)=i$ or $\phi(\ell_2)=i$. The induction hypothesis implies $\phi^{-1}(i)$
		contains a join-irreducible of $L$.
	\end{proof}
\begin{lemma}
\label{when minor gequal is containment}
	Let~$(L,G)$ be a {\genlatt} and let~$(K,H)$ and~$(M,\irr(M))$ be minors of~$(L,G)$.
	If~$\irr(M)\subseteq\irr([\zerohat_M,\onehat_L])$
	and~$M\subseteq K$ then~$(M,\irr(M))\le(K,H)$.
\end{lemma}

	\begin{proof}
		By assumption,~$\irr(M)\subseteq\irr([\zerohat_M,\onehat_L])\cap K$.
		An element~$x\in\irr([\zerohat_M,\onehat_L])\cap K$ is an
		element of~$[\zerohat_M,\onehat_L]\cap K$ that
		covers only one element that is greater than or equal to~$\zerohat_M$.
		Such an element~$x$ is also an element of~$\irr([\zerohat_M,\onehat_K])$
		so~$\irr(M)\subseteq\irr([\zerohat_M,\onehat_K])\cap K$.
		Using the inclusion map~$[\zerohat_M,\onehat_K]\cap K\rightarrow[\zerohat_M,\onehat_K]$
		\Cref{join-irr-fiber-lemma} implies
		\[\irr([\zerohat_M,\onehat_K])\cap K=\irr([\zerohat_M,\onehat_K]\cap K).\]
		Thus, we have established
		\[\irr(M)\subseteq\irr([\zerohat_M,\onehat_K]\cap K).\]

		Consider the map~$k\mapsto\zerohat_M\join k$ which is a join-preserving
		surjection from~$K$ onto the interval~$[\zerohat_M,\onehat_K]\cap K$ of~$K$.
		The image of~$\irr(K)$ includes~$\irr([\zerohat_M,\onehat_K]\cap K)$,
		hence the image of~$H$ includes~$\irr(M)$. This establishes that
		the minor~$(M,\irr(M))$ is a deletion of~$(K,H)/H_{\le\zerohat_M}$.
	\end{proof}

\begin{proposition}
\label{M(B_n) is cube}
	The minor poset~$\M(B_n,\irr(B_n))$ of the Boolean algebra
	is isomorphic to the face lattice,~$Q_n$, of
	the~$n$-dimensional cube, that is,
	\[\M(B_n,\irr(B_n))\cong Q_n\]
\end{proposition}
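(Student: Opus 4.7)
The plan is to construct an explicit bijection between the minors of $(B_n,\irr(B_n))$ and the faces of the $n$-cube and then verify it preserves the partial order. Since $B_n$ is a minimally generated distributive lattice, Proposition~\ref{distr minors} identifies its minors with order minors of the antichain on $[n]$, i.e.\ with pairs $(I,J)$ of disjoint subsets of $[n]$. Unwinding Definition~\ref{del-contr def}, the minor associated to $(I,J)$ has underlying lattice $K_{I,J}=\{J\cup X:X\subseteq I\}\subseteq B_n$ with generating set $\{J\cup\{i\}:i\in I\}$. On the cube side, a (nonempty) face is determined by declaring, for each coordinate $i\in[n]$, whether it is free, fixed at $1$, or fixed at $0$, so the nonempty faces are also in bijection with pairs of disjoint subsets of $[n]$: let $I$ index the free coordinates and $J$ the coordinates fixed at $1$. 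Pair the empty face of the cube with the minimal element $\emptyset$ of $\M(B_n,\irr(B_n))$.

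To check that the bijection preserves order, I would use that $(B_n,\irr(B_n))$ has the join irreducible lift property (being minimally generated and distributive, as noted after Proposition~\ref{jilp minor of is inclusion}), so that the minor order on nonempty elements reduces to inclusion of underlying lattices. Examining the minimum element $J_r$ and the maximum element $I_r\cup J_r$ of $K_{I_r,J_r}$ and using disjointness of $I_r$ and $J_r$, I would show that $K_{I_1,J_1}\subseteq K_{I_2,J_2}$ is equivalent to the three conditions $I_1\subseteq I_2$, $J_2\subseteq J_1$, and $J_1\subseteq I_2\cup J_2$. These are precisely the conditions that the face indexed by $(I_1,J_1)$ be contained in the face indexed by $(I_2,J_2)$, translated coordinate-by-coordinate across the three cases free/fixed-at-$1$/fixed-at-$0$.

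The only nontrivial step is the bookkeeping in translating $K_{I_1,J_1}\subseteq K_{I_2,J_2}$ into those three set inclusions; everything else is a direct consequence of results already in the excerpt. As a consistency check, the atoms of $\M(B_n,\irr(B_n))$ (one-element minors, where $I=\emptyset$) match the $2^n$ vertices of the cube, the maximum element $(B_n,\irr(B_n))$ (where $I=[n]$ and $J=\emptyset$) matches the whole $n$-cube, and the rank function $\rk(K_{I,J})=\lvert I\rvert+1$ given by Lemma~\ref{minor posets are graded and thin} matches dimension plus one on the face lattice.
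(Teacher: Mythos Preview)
Your argument is correct. The paper's proof is essentially the same idea but packaged differently: it invokes the standard fact that the face lattice of the $n$-cube is the poset of intervals of $B_n$ with a new $\widehat{0}$ adjoined (Stanley, Chapter~3, Exercise~177), observes via Lemma~\ref{geometric minors} that the minors of $(B_n,\irr(B_n))$ are exactly the intervals $[J,I\cup J]$, and then, like you, appeals to Proposition~\ref{jilp minor of is inclusion} to identify the order relation with inclusion. Your proof trades the citation to Stanley for an explicit coordinate description of cube faces and trades the geometric-lattice lemma for the distributive-lattice framework of Propositions~\ref{order minors bijection} and~\ref{distr minors}; since $B_n$ is both geometric and distributive, both routes are available, and your pair $(I,J)$ is just another name for the interval $[J,I\cup J]$. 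The paper's version is shorter, while yours is more self-contained and makes the bijection with cube faces transparent without appeal to an outside reference.
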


	\begin{proof}
		Recall, the face lattice of the~$n$-dimensional cube
		is isomorphic to the poset of intervals of~$B_n$ with
		a minimum~$\widehat{0}$ appended
		\cite[Chapter~3,~Exercise~177]{stanley}.
		It is straightforward to see from Lemma~\ref{minor gen sets}
		that the minors of~$(B_n,\irr(B_n))$
		are exactly the intervals of~$B_n$. Indeed a minor of
		the form~$\langle X\cup\{i_1\},\dots,X\cup\{i_r\}|X\rangle$
		is the interval~$[X,X\cup\{i_1,\dots,i_r\}]$ with minimal generating set.
		Proposition~\ref{when minor gequal is containment} implies the order relations are
		the same.
	\end{proof}

Let~$C_n$ denote the length~$n$ chain~$\{0<1<\dots<n\}$.

\begin{proposition}
	The minor poset~$\M(C_n,\irr(C_n))$ of the length~$n$ chain~$C_n$ is isomorphic to the rank~$n+1$
	Boolean algebra, that is,
	\[\M(C_n,\irr(C_n))\cong B_{n+1}.\]
\end{proposition}

	\begin{proof}
		In a chain every element except the minimum
		is join-irreducible
		and must be a generator. As a result, every nonempty subset of the lattice~$C_n$
		is a minor; a subset~$\{i_1<\dots<i_r\}$ of $\{0,\dots,n\}$ is the
		minor~$\langle i_1\join i_2,\dots,i_1\join i_r|i_1\rangle$.
		Proposition~\ref{when minor gequal is containment} implies the minors are ordered
		by inclusion.
	\end{proof}

The following lemma characterizes the join operation in the minor poset for any {\genlatt}.

\begin{lemma}
\label{minor poset joins}
	Let~$(L,G)$ be a {\genlatt} and let\[\theta:(B_G,\irr(B_G))\rightarrow(L,G)/G_{\le\ell_0}\]
	be the canonical strong map. Let~$(K_1,H_1)$ and~$(K_2,H_2)$ be minors
	of~$(L,G)$ and set \[\ell_0=\widehat{0}_{K_1}\wedge\widehat{0}_{K_2}.\]
	The join of~$(K_1,H_1)$ and~$(K_2,H_2)$ in the minor poset~$\M(L,G)$ exists
	if and only if the following three conditions hold:
	\begin{enumerate}
		\item[\eqlabel{join condition 1}]{The
		fibers~$\theta^{-1}(\widehat{0}_{K_1})$ and~$\theta^{-1}(\widehat{0}_{K_2})$
		each have a minimum, say,~$X_1$ and~$X_2$
		respectively.}
		
		\item[\eqlabel{join condition 2}]{For each generator~$h\in H_1$ there is a unique
		generator~$g$ of~$(L,G)/G_{\le\ell_0}$ with~$g\join\widehat{0}_{K_1}=h$, and similarly
		for~$(K_2,H_2)$.}
		
		\item[\eqlabel{join condition 3}]{Any minor~$(K,H)$ of~$(L,G)$ such that~$(K,H)\ge(K_1,H_1)$
		and~$(K,H)\ge (K_2,H_2)$
		contains the element~$\ell_0$.}
	\end{enumerate}

	Let~$I$ be the set
	\begin{align}I=&\{\theta(\{x\}):x\in X_1\cup X_2\}\notag\\
		\cup&\{g\join\ell_0:g\in G\text{ and }g\join\widehat{0}_{K_1}\in H_1\}\notag\\
		\cup&\{g\join\ell_0:g\in G\text{ and }g\join\widehat{0}_{K_2}\in H_2\}.
		\label{join gen set}
	\end{align}
	If the above conditions are satisfied then
	\[(K_1,H_1)\join(K_2,H_2) = ((L,G)/G_{\le\ell_0})|_I.\]
\end{lemma}

	\begin{proof}
		Assume the join
			\[(K,H)=(K_1,H_1)\join (K_2,H_2)\]
		exists in~$\M(L,G)$.
		We claim~$\zerohat_K=\ell_0$.
		To prove this, first observe since~$(K,H)\ge(K_i,H_i)$ for~$i=1,2$,
		by taking minimal elements we see~$\zerohat_K\le\zerohat_{K_i}$
		hence~$\zerohat_K\le\ell_0$. To show the opposite inequality,
		observe~$(K_i,H_i)\le(L,G)/G_{\le\ell_0}$ for~$i=1,2$,
		hence~$(K,H)\le(L,G)/G_{\le\ell_0}$; this implies~$\zerohat_K
		\ge\ell_0$, hence~$\zerohat_K=\ell_0$. In particular,
		we have established that Condition \eqref{join condition 3} holds.
		
		The join~$(K_1,H_1)\join (K_2,H_2)$
		is a deletion of the minor~$(L,G)/G_{\le\ell_0}$,
		say,
		\[(K_1,H_1)\join (K_2,H_2)=((L,G)/G_{\le\ell_0})|_J\]
		for some set~$J$ of generators.
		The set~$J$ satisfies
		the following two conditions for~$i=1,2$:
			\begin{enumerate}
				\item{$\widehat{0}_{K_i}$ can be expressed as a join of
				elements in~$J$}
				\item{The set~$H_i$ is included in the
				set~$\{j\join\widehat{0}_{K_i}:j\in J\}$}
			\end{enumerate}
		Furthermore,~$J$ is the unique minimal set satisfying Conditions 1 and 2
		above, for if~$J'$ is another set with these properties
		then~$((L,G)/G_{\le\ell_0})|_{J'}$ is greater than or equal to
		both~$(K_1,H_1)$ and~$(K_2,H_2)$;
		thus
		\[((L,G)/G_{\le\ell_0})|_J\le((L,G)/G_{\le\ell_0})|_{J'}\]
		and~$J\subseteq J'$.
		The existence of the set~$J$ implies
		Conditions \eqref{join condition 1} and \eqref{join condition 2} hold.

		Now suppose Conditions \eqref{join condition 1} through
		\eqref{join condition 3}
		are satisfied. Let~$I$ be the
		set defined in \Cref{join gen set}.
		Consider a minor~$(K',H')$ of~$(L,G)$ that satisfies
		\[(K',H')\ge(K_1,H_1)
		\phantom{???}\text{and}\phantom{???}
		(K',H')\ge(K_2,H_2),\]
		we will show~$(K',H')\ge((L,G)/G_{\le\ell_0})|_I$.
		By Condition~\eqref{join condition 3}~$\ell_0\in K'$,
		so the inequality
		\[(K',H')/H'_{\le\ell_0}\ge(K_i,H_i)\]
		is satisfied for~$i=1,2$. We assume~$\zerohat_{K'}=\ell_0$
		by replacing~$(K',H')$ with~$(K',H')/H'_{\le\ell_0}$.

		By assumption,
		$(K',H')\ge(K_i,H_i)$
		for~$i=1,2$, hence,
			\[H_i\subseteq\{h\join\zerohat_{K_i}:h\in H'\}\]
		and~$\zerohat_{K_i}\in K'$. The uniqueness in
		Condition~\eqref{join condition 1} and~\eqref{join condition 2}
		imply~$H'$ includes the set~$I$ defined in \Cref{join gen set}.
		Thus,~$(K',H')\ge((L,G)/G_{\le\ell_0})|_I$.
	\end{proof}

In the remainder of this section we show two results concerning how operations affect the
minor poset. First, we establish the following lemma which allows us to induce
order-preserving maps on minor posets from strong maps.
Then we examine minor posets for Cartesian products of {\genlatts} and the
effect of adjoining a new maximum to a {\genlatt}.

\begin{lemma}
\label{induced map}
	Let~$f:(L,G)\rightarrow(K,H)$ be a strong map between {\genlatts}.
	The induced map~$F:\M(L,G)\rightarrow\M(K,H)$
	defined by
	\[F(\langle I|z\rangle)=\langle f(I)|f(z)\rangle\]
	is order-preserving. Furthermore, if~$f$ is surjective so is~$F$.
\end{lemma}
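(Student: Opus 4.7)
The plan is to verify two things in turn: first, that $F$ is well-defined as a map into $\M(K,H)$, and second, that $F$ preserves the minor-of relation. By convention I set $F(\emptyset)=\emptyset$ so as to handle the formal minimal element of $\M(L,G)$.

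For well-definedness, I would start by invoking Lemma~\ref{minor gen sets} to present any minor $\langle I|z\rangle$ of $(L,G)$ in the standard form $\langle z\vee g_1,\dots,z\vee g_k|z\rangle$ with $z\in L$ and $g_i\in G$ satisfying $g_i\not\le z$. Applying $f$ and exploiting that $f$ is join-preserving with $f(G)\subseteq H\cup\{\widehat{0}_K\}$, each image generator $f(z\vee g_i)=f(z)\vee f(g_i)$ either collapses to $f(z)$ (precisely when $f(g_i)\le f(z)$) or has the form $f(z)\vee h$ for some $h\in H$ with $h\not\le f(z)$. After discarding the redundant generators that coincide with the new minimum $f(z)$, the remaining data matches the template of Lemma~\ref{minor gen sets} applied to $(K,H)$, so $F(\langle I|z\rangle)$ is genuinely a minor of $(K,H)$.

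For order-preservation, suppose $(K_1,H_1)\le(K_2,H_2)$ in $\M(L,G)$, so that $(K_1,H_1)$ is a minor of $(K_2,H_2)$. Applying Lemma~\ref{minor gen sets} to $(K_2,H_2)$, I would write $(K_1,H_1)=\langle\widehat{0}_{K_1}\vee h_{j_1},\dots,\widehat{0}_{K_1}\vee h_{j_s}|\widehat{0}_{K_1}\rangle$ for some $\widehat{0}_{K_1}\in K_2$ and some $h_{j_i}\in H_2$; note that $\widehat{0}_{K_2}\le\widehat{0}_{K_1}$ since $\widehat{0}_{K_1}\in K_2$. Writing the generators of $(K_2,H_2)$ in turn as $\widehat{0}_{K_2}\vee g_i$ with $g_i\in G$ and substituting, I obtain a presentation of $(K_1,H_1)$ in the generators of $(L,G)$ compatible with that of $(K_2,H_2)$. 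Applying $f$ then yields compatible presentations of $F(K_1,H_1)$ and $F(K_2,H_2)$: the former is obtained from the latter by first contracting so as to raise the minimum from $f(\widehat{0}_{K_2})$ to $f(\widehat{0}_{K_1})$, and then deleting the generators not indexed by $j_1,\dots,j_s$. Hence $F(K_1,H_1)$ is a minor of $F(K_2,H_2)$.

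The main obstacle is the bookkeeping around ``trivial'' generators—elements $g$ with $f(g)\in\{\widehat{0}_K\}$ or $f(g)\le f(z)$—which collapse into the new minimal element after applying $F$, and must be consistently absorbed in the notation $\langle f(I)|f(z)\rangle$. Once this convention is fixed the argument is essentially formal, since the fact that $f$ is join-preserving and carries distinguished generators to distinguished generators means $f$ commutes with the join operations used to construct minors, so the contraction-then-deletion decomposition witnessing $(K_1,H_1)\le(K_2,H_2)$ transfers directly to a contraction-then-deletion decomposition witnessing $F(K_1,H_1)\le F(K_2,H_2)$.
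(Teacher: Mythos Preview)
Your proposal is correct and follows essentially the same approach as the paper: both arguments use Lemma~\ref{minor gen sets} to describe the generators of $(K_1,H_1)$ as joins $\widehat{0}_{K_1}\vee h$ with $h\in H_2$, apply the join-preserving map $f$ to transport this description to $(K,H)$, and conclude that $F(K_1,H_1)$ is a contraction-then-deletion of $F(K_2,H_2)$. Your version is slightly more thorough in that you explicitly verify well-definedness of $F$ (that the image of a minor is a minor), which the paper leaves implicit; conversely, the paper spells out more carefully that $f(\widehat{0}_{K_1})$ actually lies in the underlying lattice of $F(K_2,H_2)$ so that the contraction is defined, a point you only handle implicitly through your ``compatible presentations.''
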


	\begin{proof}
		Let~$(L_1,G_1)$ and~$(L_2,G_2)$ be minors of~$(L,G)$
		such that~$(L_1,G_1)\le(L_2,G_2)$.
		Set~$(K_1,H_1)=F(L_1,G_1)$ and~$(K_2,H_2)=F(L_2,G_2)$.
		Since~$(L_1,G_1)$ is a minor of~$(L_2,G_2)$
		the generating
		set~$G_1$ is a subset of the set~$\{g\join\widehat{0}_{L_1}:
		g\in G_2\}\setminus\{\widehat{0}_{L_1}\}$. Applying the map~$F$,
		the set~$H_1$ is a subset of the set
		\[
		\{f(g)\join f(\widehat{0}_{L_1}):g\in G_2\}\setminus\{f(\widehat{0}_{L_1})\}
		=\{h\join\widehat{0}_{K_1}:h\in H_2\}\setminus\{\widehat{0}_{K_1}\}.
		\]
		This establishes that if~$\widehat{0}_{K_1}$ is an element
		of~$K_2$ then~$(K_1,H_1)$ is a deletion
		of
			\[(K_2,H_2)/\{h\in H_2:h\le\widehat{0}_{K_1}\}\]
		hence~$(K_1,H_1)\le(K_2,H_2)$.
		For~$\widehat{0}_{K_1}$ to be contained in~$K_2$
		the element~$\widehat{0}_{K_1}$ must be able to
		be expressed as a join of elements from~$H_2$. The
		element~$\widehat{0}_{L_1}$ can be expressed as a
		join~$\widehat{0}_{L_1}=g_1\join\cdots\join g_r$
		for some generators~$g_i\in G_2$ for~$i=1,\dots,r$.
		Applying the map~$f$ yields~$\widehat{0}_{K_1}=f(g_1)\join
		\cdots\join f(g_r)$; the elements~$f(g_i)$ are contained
		in~$H_2\cup\{\widehat{0}_{K_2}\}$.
		Removing any elements~$f(g_i)=\zerohat_{K_2}$ gives the desired expression
		of~$\zerohat_{K_1}$.
		Therefore~$(K_1,H_1)\le(K_2,H_2)$ and the induced
		map~$F$ is order-preserving.

		The surjectivity claim follows directly from \Cref{minor gen sets}.
	\end{proof}

We next examine the Cartesian product of {\genlatts}.
Given two {\genlatts}~$(L,G)$ and~$(K,H)$ define the \emph{Cartesian
product}~$(L,G)\times(K,H)$ to be the {\genlatt} 
	\[(L,G)\times(K,H)=(L\times K,(G\times\{\widehat{0}_K\})\cup
	(\{\widehat{0}_L\}\times K)).\]
This operation corresponds to the diamond product of minor posets.
Recall that for two posets~$P$ and~$Q$ the \emph{diamond
product}~$P\diamond Q$ is defined to be the poset
	\[((P\setminus\{\widehat{0}_P\})\times
	(Q\setminus\{\widehat{0}_Q\}))\cup\{\widehat{0}\}\]
in which~$\zerohat$ is a new minimum.
The pyramid and prism operations on a poset $P$ are defined
as~$\pyr(P)=P\times B_1$ and~$\prism(P)=P\diamond B_2$.
We define~$\pyr$ on {\genlatts}
in the same manner, that is,~$\pyr(L,G)=(L,G)\times(B_1,\irr(B_1))$.

The following proposition shows Cartesian products correspond to diamond products.
This is a generalization of
the fact that minor poset of a rank~$n$ Boolean algebra is the
face lattice of an $n$-dimensional cube.
\begin{proposition}
\label{coproduct to diamond product}
	For any {\genlatts}~$(L,G)$ and~$(K,H)$ we have
	\[\M((L,G)\times(K,H))\cong\M(L,G)\diamond\M(K,H).\]
	In particular,~$\M(\pyr(L,G))\cong\prism(\M(L,G))$.
\end{proposition}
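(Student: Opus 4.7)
The plan is to give an explicit bijection between the nonempty minors of $(L,G)\times(K,H)$ and pairs consisting of a minor of $(L,G)$ and a minor of $(K,H)$, matching the adjoined minimum $\emptyset$ of $\M$ with the adjoined $\widehat{0}$ of the diamond product, and then to verify the bijection is order-preserving in both directions. Since by definition the non-minimum elements of $\M(L,G)\diamond\M(K,H)$ are exactly such pairs, this yields the desired isomorphism.

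First I would use Lemma~\ref{minor gen sets} to write every minor of $(L,G)\times(K,H)$ as $\langle (\ell,k)\vee g':g'\in I\rangle$ with minimum $(\ell,k)\in L\times K$, where each $g'\in I$ lies in $G\times\{\widehat{0}_K\}$ or in $\{\widehat{0}_L\}\times H$. Joining with $(g,\widehat{0}_K)$ affects only the first coordinate and joining with $(\widehat{0}_L,h)$ only the second, so the minor factors uniquely as the product $(L_1,G_1)\times(K_1,H_1)$, where $(L_1,G_1)$ is the minor of $(L,G)$ with minimum $\ell$ whose generators are the first coordinates of the $g'\in I$ coming from $G\times\{\widehat{0}_K\}$, and $(K_1,H_1)$ is obtained symmetrically. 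This establishes the bijection on nonempty minors.

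For order preservation in the easier direction, I would apply Lemma~\ref{induced map} to the two projection strong maps from $(L,G)\times(K,H)$ onto $(L,G)$ and $(K,H)$, so that each inequality between minors of the product projects to inequalities in $\M(L,G)$ and $\M(K,H)$. Conversely, given componentwise inequalities, I would use Lemma~\ref{contr then del} to express each component as a contract-then-delete sequence in its factor, and then assemble these into a single contract-then-delete on the product by contracting $(I_L\times\{\widehat{0}_K\})\cup(\{\widehat{0}_L\}\times I_K)$ and then deleting $(J_L\times\{\widehat{0}_K\})\cup(\{\widehat{0}_L\}\times J_K)$; because the operations on the two coordinates are independent, this realizes $(L_1,G_1)\times(K_1,H_1)$ as a minor of $(L_2,G_2)\times(K_2,H_2)$. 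For the prism statement I would compute $\M(B_1,\irr(B_1))\cong B_2$ by inspection, since the length-one chain has exactly three minors above the adjoined minimum, and then conclude $\M(\pyr(L,G))\cong\M(L,G)\diamond B_2=\prism(\M(L,G))$.

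I expect the main obstacle to be the bookkeeping in the reverse direction: verifying that the set of generators produced by the assembled contract-then-delete on the product agrees exactly with that of $(L_1,G_1)\times(K_1,H_1)$, handling carefully the cases where a join $g\vee\widehat{0}_{L_1}$ or $h\vee\widehat{0}_{K_1}$ collapses to the new minimum and the corresponding product generator drops out. Once this matching is confirmed, the bijection is seen to respect both the minor relation and the decomposition, completing the proof.
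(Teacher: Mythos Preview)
Your proposal is correct and follows essentially the same route as the paper: both arguments use the projection strong maps together with Lemma~\ref{induced map} for one direction, identify minors of the product via Lemma~\ref{minor gen sets} as products of minors, and for the reverse direction verify that componentwise deletions and contractions assemble into deletions and contractions on the product. The only cosmetic difference is that the paper checks directly that the inverse map $\psi$ commutes with single deletions and contractions in each coordinate, whereas you first normalize via Lemma~\ref{contr then del} and then assemble; these are the same verification in slightly different packaging.
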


	\begin{proof}
		Let~$\pi_1:(L,G)\times(K,H)\rightarrow(L,G)$
		and~$\pi_2:(L,G)\times(K,H)\rightarrow(K,H)$ be the obvious projection maps.
		By Lemma~\ref{induced map}
		these induce order-preserving maps~$\overline{\pi}_1$
		and~$\overline{\pi}_2$ between the minor posets.
		Neither of these maps sends a minor to the minimum~$\emptyset$,
		so we have an order-preserving
		map~$\phi:\M((L,G)\times(K,H))\rightarrow\M(L,G)\diamond
		\M(K,H)$ defined on minors by~$\phi(M,I)=(\overline{\pi}_1(M,I),
		\overline{\pi}_2(M,I))$.

		The inverse of~$\phi$ is the map~$\psi$ defined for~$((L',G'),(K',H'))$
		in~$(\M(L,G)\diamond\M(K,H))\setminus\{\emptyset\}$ by
		\[\psi((L',G'),(K',H'))=\langle (G'\times\{\widehat{0}_{K'}\})\cup
		(\{\widehat{0}_{L'}\}\times H')|(\widehat{0}_{L'},\widehat{0}_{K'})\rangle.\]
		To see that the image under~$\psi$ is indeed a minor,
		let~$G''\subseteq G$ be such that
		\[G'=\{g\join\widehat{0}_{L'}:g\in G''\}.\]
		and similarly define~$H''$. The generating set of the image under~$\psi$
		can be described as
		\[
		\{(g,\widehat{0}_K)\join(\widehat{0}_{L'},\widehat{0}_{K'}):g\in G''\}
		\cup\{(\widehat{0}_L,h)\join(\widehat{0}_{L'},\widehat{0}_{K'}):h\in H''\},
		\]
		which is the generating set of a minor by Lemma~\ref{minor gen sets}.

		Observe for~$I\subseteq G'$ that
		\[\psi((L',G')\setminus I,(K',H'))
		=\psi((L',G'),(K',H'))\setminus(I\times\{\widehat{0}_{K'}\}),\]
		and~\[\psi((L',G')/I,(K',H'))
		=\psi((L',G'),(K',H'))/(I\times\{\widehat{0}_{K'}\}.\]
		A similar statement holds for deletions and contractions
		of~$(K',H')$. Thus, the map~$\psi$ is order-preserving.
	\end{proof}

We now consider the minor poset of the result of adjoining a new maximum
to a {\genlatt}.
Given a{ \genlatt}~$(L,G)$ let~$\widehat{L}$ denote the lattice obtained
from~$L$ by adjoining a new maximal element~$m>\widehat{1}_L$
and let~$\widehat{G}=G\cup\{m\}$. 

The following proposition may be thought of as a generalization of the fact
that the minor poset of a chain is a Boolean algebra.

\begin{proposition}
	For any {\genlatt}~$(L,G)$ we have
	\[\M(\widehat{L},\widehat{G})\cong\pyr(\M(L,G)).\]
\end{proposition}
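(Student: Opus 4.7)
The plan is to exhibit an explicit order isomorphism $\phi \colon \M(\widehat{L}, \widehat{G}) \to \pyr(\M(L,G))$. Since $\pyr(\M(L,G)) = \M(L,G) \times B_1$, I need to pair every element of $\M(\widehat{L}, \widehat{G})$ with an element of $\M(L,G)$ together with a label in $\{0, 1\}$.

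First I would classify the minors of $(\widehat{L}, \widehat{G})$ via Lemma~\ref{minor gen sets}. Any minor $\langle S | \ell \rangle$ of $(\widehat{L}, \widehat{G})$ falls into exactly one of three cases: (a) $\ell \in L$ and $m \notin S$, so the minor coincides with a minor of $(L,G)$; (b) $\ell \in L$ and $m \in S$, so the minor has the form $(M \cup \{m\}, I \cup \{m\})$ for a unique minor $(M, I)$ of $(L,G)$; or (c) $\ell = m$, which forces $S = \emptyset$ and yields the trivial minor $(m, \emptyset)$. Define $\phi$ by sending $\emptyset \mapsto (\emptyset, 0)$, a case-(a) minor $(M, I) \mapsto ((M, I), 0)$, a case-(b) minor $(M \cup \{m\}, I \cup \{m\}) \mapsto ((M, I), 1)$, and $(m, \emptyset) \mapsto (\emptyset, 1)$. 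The classification shows $\phi$ is a bijection.

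Next I would verify that $\phi$ preserves and reflects order. Since the underlying lattice of a minor of a minor is contained in the original, no case-(a) minor can be above a case-(b) or case-(c) minor; this matches the pyramid condition that elements with label $0$ are never above elements with label $1$. For the remaining comparisons, Lemma~\ref{minor gen sets} expresses $(M_1, I_1) \le (M_2, I_2)$ via the conditions $\widehat{0}_{M_1} \in M_2$ and $I_1 \subseteq \{\widehat{0}_{M_1} \vee g : g \in I_2\}$. Applying this to case-(a) versus case-(a), case-(b) versus case-(b), and case-(a) below case-(b), the joins with $m$ contribute trivially (since $\widehat{0}_M \vee m = m$), so each relation reduces to the relation between the corresponding underlying minors of $(L,G)$, matching the pyramid order. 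Finally, $(m, \emptyset)$ is a minor of the case-(b) minor $(M \cup \{m\}, I \cup \{m\})$ via the choice $\ell = m$, $S = \emptyset$, and is not a minor of any case-(a) minor; this matches the pyramid relations involving $(\emptyset, 1)$.

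The argument is mostly bookkeeping once the classification of minors is in place. The only subtlety is that $(m, \emptyset)$ does not arise from any minor of $(L, G)$ by adjoining $m$ and must be handled as its own case; under $\phi$ it serves as the additional atom $(\emptyset, 1)$ of the pyramid, absorbing the rank-$1$ element that distinguishes $\pyr(\M(L,G))$ from a second copy of $\M(L,G)$.
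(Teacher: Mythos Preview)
Your proposal is correct and follows essentially the same approach as the paper: you define the same bijection $\phi$ (with labels $0,1$ in place of $\widehat{0},\widehat{1}$) via the same three-case classification of minors of $(\widehat{L},\widehat{G})$, and then verify it is an order isomorphism. Your write-up is in fact more explicit than the paper's, which simply asserts that $\phi$ and its inverse are order-preserving without spelling out the case analysis you give.
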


	\begin{proof}
		Let~$m$ be the maximum of~$\widehat{L}$.
		Observe, since~$m$ is join-irreducible and the maximum
		of~$\widehat{L}$, for any
		minor~$(K,H)\ne\langle\emptyset|m\rangle$ of~$(\widehat{L},\widehat{G})$,
		both~$(K\cup\{m\},H\cup\{m\})$
		and~$(K\setminus\{m\},H\setminus\{m\})$
		are minors of~$(L,G)$.
		Define a map~$\phi:\M(\widehat{L},\widehat{G})\rightarrow\pyr(\M(L,G))$
		by setting~$\phi(\emptyset)=(\emptyset,\widehat{0})$, and for minors~$(K,H)$
		of~$(L,G)$ setting
		\[
			\phi(K,H)=\begin{cases}
					(K,H,\widehat{0})&\text{if }m\not\in K,\\
					(K\setminus\{m\},H\setminus\{m\},\widehat{1})&
					\text{if }m\in H,\\
					(\emptyset,\widehat{1})&\text{if }K=\{m\},
					\ H=\emptyset.\\
				\end{cases}
		\]
		The map~$\phi$ is order-preserving.
		Define~$\psi:\pyr(\M(L,G))\rightarrow\M(\widehat{L},\widehat{G})$
		by setting~$\psi(\emptyset,\widehat{0})=\emptyset$
		and~$\psi(\emptyset,\widehat{1})=\langle\emptyset|m\rangle$
		and setting
		\[\psi(K,H,\varepsilon)=\begin{cases}
		(K,H)&\text{if }\varepsilon=\widehat{0},\\
		(K\cup\{m\},H\cup\{m\})&\text{if }\varepsilon=\widehat{1}.
		\end{cases}
		\]
		Clearly the map~$\psi$ is order-preserving and is the inverse of~$\phi$.
	\end{proof}

\section{Factoring strong surjections}
\label{factoring section}

In this section we provide a process to factor any strong surjection
into strong surjections that only identify two elements. This factorization
is used to simplify the zipping construction in \Cref{CW sphere section}.
One may consider this factorization as a polymatroid analogue of the factorization into
elementary strong maps via Higgs lifts from \cite{higgs}.

\begin{definition}
	Let~$\mathcal{E}(L,G)$ denote the poset consisting of edges of the diagram
	of a {\genlatt}~$(L,G)$, that is, pairs~$\{\ell,\ell\join g\}$ for~$\ell\in L$
	and~$g\in G$ such that~$\ell\join g\ne\ell$.
	The order relation of~$\mathcal{E}(L,G)$ is defined
	by~$\{\ell,\ell\join g\}\le\{a\join\ell,a\join\ell\join g\}$
	for~$a\in L$.
\end{definition}

\begin{definition}
\label{conn equiv rel def}
	Given an equivalence relation~$\phi$ on a {\genlatt}~$(L,G)$,
	an \emph{edge} of~$\phi$ is an edge~$\{\ell,\ell\join g\}$ of~$(L,G)$
	such that~$\ell\equiv\ell\join g(\phi)$.

	Let~$\mathcal{E}(\phi)$
	denote the set of edges of~$\phi$.
	The equivalence relation~$\phi$ is said to be
	\emph{connected} if for all~$a\equiv b(\phi)$ there is a
	sequence~$a=c_0,c_1,\dots,c_k=b$ such that for~$1\le i\le k$
	the pair~$\{c_{i-1},c_i\}$ is an edge of~$\phi$.
\end{definition}

Note that a connected relation is determined by its set of edges.

\begin{lemma}
\label{join relations and lift order ideals}
Let~$\phi$ be an equivalence relation on a {\genlatt}~$(L,G)$.
The relation~$\phi$ is join-preserving if and only if the following two conditions
hold.
	\begin{itemize}
		\item[(i)]{$\phi$ is connected.}
		\item[(ii)]{The set
		of edges~$\mathcal{E}(\phi)$ forms an upper order ideal in the
		edge poset~$\mathcal{E}(L,G)$.}
	\end{itemize}
\end{lemma}

	\begin{proof}
		Let~$\phi$ be a join preserving equivalence relation on~$(L,G)$.
		To show~$\phi$ is connected let~$a,b\in(L,G)$
		with~$a\equiv b(\phi)$. Then, the congruence~$a\equiv a\join b\equiv b(\phi)$
		is satisfied.
		Choose some sequence~$g_1,\dots,g_r$ of generators
		of~$(L,G)$ such that
			\[a<a\join g_1<\dots<a\join (g_1\join\cdots\join g_r)=a\join b.\]
		Each term of the sequence must be
		congruent to~$a$ since said terms lie in the interval~$[a,a\join b]$.
		Thus, each pair of subsequent terms in the sequence is an edge
		of~$\phi$. There exists a similarly defined sequence
		from~$b$ to~$a\join b$. Concatenating
		these two sequences gives a sequence from~$a$ to~$b$ consisting
		of edges of~$\phi$. Therefore~$\phi$ is connected.
		
		In order to show the set~$\mathcal{E}(\phi)$ forms an upper
		order ideal of the poset~$\mathcal{E}(L,G)$,
		let~$\{a,b\}$ be an edge of~$\phi$ and let~$\ell\in L$.
		Since~$a\equiv b(\phi)$ and~$\phi$ is
		join-preserving~$a\join\ell\equiv b\join\ell
		(\phi)$. Hence, if~$a\join\ell\ne b\join\ell$ then~$\{a\join\ell,
		b\join\ell\}$ is an
		edge of~$\phi$. So~$\mathcal{E}(\phi)$
		is an upper order ideal of~$\mathcal{E}(L,G)$.

		Conversely, consider a connected equivalence relation~$\phi$ on~$L$
		such that the set of edges~$\mathcal{E}(\phi)$
		forms an upper order ideal in the poset~$\mathcal{E}(L,G)$ of edges
		of~$(L,G)$. Let~$a,b\in L$ such that~$a\equiv b(\phi)$.
		Since~$\phi$ is connected there is a
		sequence~$a=c_0,c_1,\dots,c_k=b$ such that for~$1\le i\le k$
		the pair~$\{c_{i-1},c_i\}$ is an edge of~$\phi$.
		Taking the join with any~$\ell\in L$ results in a
		sequence~$a\join\ell=c_0\join\ell,c_1\join\ell,\dots,
		c_k\join\ell=b\join\ell$. For~$1\le i\le k$
		either~$c_{i-1}\join\ell=c_i\join\ell$
		or~$\{c_{i-1}\join\ell,c_i\join\ell\}$ is an edge of~$L$.
		In the second case since~$\mathcal{E}(\phi)$ is an upper order ideal,
		the edge~$\{c_{i-1},c_i\}\in\mathcal{E}(\phi)$ implies
		that~$\{c_{i-1}\join\ell,c_i\join\ell\}\in\mathcal{E}(\phi)$.
		Removing repeated terms results in a sequence
		from~$a\join\ell$ to~$b\join\ell$ consisting of
		edges of~$\phi$. Thus~$a\join\ell\equiv b\join\ell(\phi)$,
		hence~$\phi$ is a join preserving equivalence relation.
	\end{proof}

\begin{lemma}
\label{irreducible strong maps}
	Let~$(L,G)$ and~$(K,H)$ be {\genlatts} and
	let~$f:(L,G)\rightarrow(K,H)$
	be a strong map. If the map~$f$ has
	a single nontrivial fiber~$\{x,y\}$ in~$L$,
	then the element~$x$ is only covered by~$y$ or vice versa.
\end{lemma}

	\begin{proof}
		Since~$f(x)=f(y)$ it must be that~$f(x\join y)=f(x)\join f(y)
		=f(x)$; and since~$f^{-1}(x)=\{x,y\}$
		the element~$x\join y$ is either~$x$ or~$y$. Without
		loss of generality we may assume~$x<y$. In fact~$x\prec y$,
		since if~$x\le z \le y$ then~$f(x)\le f(z)\le f(y)=f(x)$
		so~$f(z)=f(x)$; hence,~$z$ must be equal to~$x$ or to~$y$.

		Suppose~$z>x$, then~$f(z)=f(x\join z)=f(y\join z)$.
		The map~$f$
		is invertible when restricted to~$L\setminus\{x\}$, so this
		implies~$y\join z = z$ hence~$y\le z$.
		Therefore~$x$ is only covered by~$y$.
	\end{proof}

\begin{lemma}
\label{strong map factorization}
	Let~$(L,G)$ and~$(K,H)$ be {\genlatts}. Any strong
	surjection~$f:(L,G)\rightarrow(K,H)$ can be factored
	as~$f=f_r\circ\dots\circ f_1$ where each map~$f_i$
	is a strong surjection that identifies only two elements.
\end{lemma}

	\begin{proof}
		Consider the map~$f$ as an equivalence relation on~$L$
		defined by~$a\equiv b(f)$ when~$f(a)=f(b)$.
		By Lemma~\ref{join relations and lift order ideals} this equivalence
		relation is connected and
		the edges form an upper order ideal in the
		poset~$\mathcal{E}(L,G)$ of edges of~$(L,G)$.
		Choose a linear extension of~$\mathcal{E}(L,G)$. Then order
		the edges of~$f$ as~$\{x_1,y_1\},\dots,\{x_r,y_r\}$
		by the chosen linear extension.
		Define equivalence relations~$f_i$ for~$1\le i\le r$ by
		letting~$f_i$ be the transitive closure of the relation
		defined by~$x_j\equiv y_j$ for~$1\le j\le i$.
		By definition~$f_i$ is connected and the edges of~$f_i$
		form an upper order ideal in~$\mathcal{E}(L,G)$
		so~$f_i$ is join-preserving. Thus, each relation~$f_i$
		defines a {\genlatt}, namely the quotient~$f_i(L,G)$.
		Furthermore, for each~$i$
		either~$f_{i+1}(L,G)=f_i(L,G)$
		or~$f_{i+1}(L,G)$ is obtained from~$f_i(L,G)$
		by identifying two elements, namely~$f_i(x_{i+1})$ and~$f_i(y_{i+1})$.
		Thus, the strong map~$f$ factors as the product~$f_r\circ\dots\circ f_1$
		of the quotient maps. Removing any instances of
		identity maps gives the desired factorization.
	\end{proof}

\Cref{map factorization fig} depicts an example of the factorization from \Cref{strong map factorization}.

\begin{figure}
\centering
\includegraphics[width=0.9\textwidth]{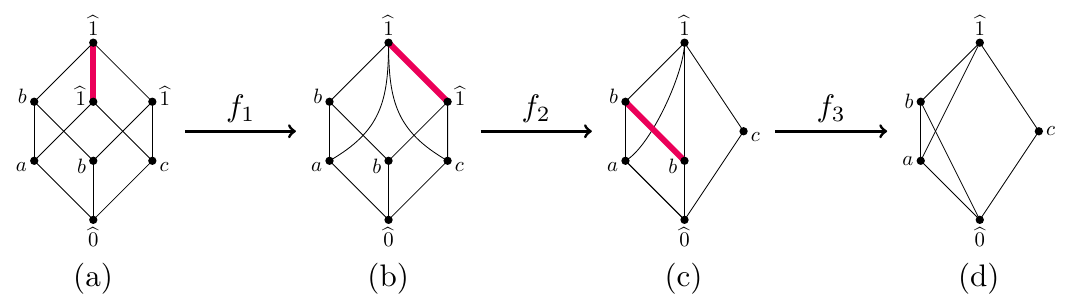}
\caption{An example of the factorization from \Cref{strong map factorization}. The labels
indicate the images in the final lattice (d) and the thicker red edges indicate the two elements
identified by the maps~$f_i$.}
\label{map factorization fig}
\end{figure}

\section{Minor posets as CW spheres}
\label{CW sphere section}

In this section we prove the main theorem, which is a zipping construction for minor
posets. 
For details on the
zipping operation, see the \hyperlink{zipping-appendix}{appendix}.
To begin, we first characterize fibers of maps between minor posets induced
by strong maps from the factorization of \Cref{irreducible strong maps}.

Recall from Lemma~\ref{induced map} that a strong map~$f:(L,G)\rightarrow(K,H)$
induces an order-preserving map~$F:\M(L,G)\rightarrow\M(K,H)$ defined
by
\[F(\langle I|z\rangle)=\langle f(I)|f(z)\rangle.\]

\begin{lemma}
\label{irreducible minor maps}
	Let~$f:(L,G)\rightarrow(K,H)$ be a strong map between {\genlatts}.
	If the map~$f$ has a single nontrivial fiber~$x\prec y$
	then the induced map~$F:\M(L,G)\rightarrow\M(K,H)$
	has nontrivial fibers of the form
	\[\{(M,I),(M_x,I_x),(M_y,I_y)\},\]
	where
	\begin{align}
		x,y&\in I\cup\{\widehat{0}_M\},
			\label{irr minor map fiber 1}\\
		(M_x,I_x)&=(M,I)\setminus\{y\},
			\label{irr minor map fiber 2}\\
		(M_y,I_y)&=\begin{cases}(M,I)\setminus\{x\}&\text{if }x\in I,\\
		(M,I)/\{y\}&\text{if }x=\widehat{0}_M.\end{cases}
			\label{irr minor map fiber 3}
	\end{align}
\end{lemma}

	\begin{proof}
		Clearly such a triple~$\{(M,I),(M_x,I_x),(M_y,I_y)\}$ intersects a single
		fiber of~$F$, and
		given any minor~$(M,I)$ with~$x,y\in I\cup\{\widehat{0}_M\}$
		there exists such a triple. Furthermore, the fiber containing~$(M,I)$
		is precisely the set~$\{(M,I),(M_x,I_x),(M_y,I_y)\}$.

		Now suppose~$(M_1,I_1)$ and~$(M_2,I_2)$
		are minors
		of~$(L,G)$ in the same fiber of~$F$ such that neither~$I_1\cup
		\{\widehat{0}_{M_1}\}$ nor~$I_2\cup\{\widehat{0}_{M_2}\}$ contains
		both~$x$ and~$y$. We must show there is a minor~$(M,I)$ such
		that the triple~$\{(M,I),(M_1,I_1),(M_2,I_2)\}$ is of the form
		described by Conditions~\eqref{irr minor map fiber 1}
		through~\eqref{irr minor map fiber 3}.
		By assumption~$f(\widehat{0}_{M_1})=f(\widehat{0}_{M_2})$.
		Either~$\widehat{0}_{M_1}=\widehat{0}_{M_2}$ or one is~$x$ and the other~$y$.
		Consider the case where~$\widehat{0}_{M_1}=\widehat{0}_{M_2}$.
		The equality~$F(M_1,I_1)=F(M_2,I_2)$ implies~$f(I_1)=f(I_2)$;
		hence, these sets differ by
		exchanging~$x$ and~$y$.
		There is a minor~$(M,I)$ of~$(L,G)$
		with~$I=I_1\cup I_2$ and~$\widehat{0}_M=\widehat{0}_{M_1}$.
		Furthermore, this minor satisfies
			\[F(M,I)=F(M_1,I_1)=F(M_2,I_2)\]
		and~$(M_1,I_1)$ and~$(M_2,I_2)$ are
		the minors~$(M,I)\setminus\{x\}$ and~$(M,I)\setminus\{y\}$.

		Now consider the case where~$\widehat{0}_{M_1}\ne\widehat{0}_{M_2}$.
		Since~$f(\widehat{0}_{M_1})=f(\widehat{0}_{M_2})$ one must
		be~$x$ and the other~$y$; say~$\widehat{0}_{M_1}=x$.
		Since~$x\prec y$ the element~$y$ is a generator of
		the contraction~$(L,G)/G_{\le x}$. Consequently, there is a minor~$(M,I)$
		of~$(L,G)$ with~$\widehat{0}_M=x$ and~$I=I_1\cup\{y\}$.
		Furthermore,~$(M_1,I_1)=(M,I)\setminus\{y\}$ and~$(M_2,I_2)=(M,I)/\{y\}$.
	\end{proof}

\begin{theorem}
\label{zip construction}
	Let~$(L,G)$ and~$(K,H)$ be {\genlatts} such that there is a strong
	surjection from~$(L,G)$ onto~$(K,H)$. The minor poset~$\M(K,H)$
	can be obtained from~$\M(L,G)$ via a sequence of zipping operations.
\end{theorem}

\begin{example}
\Cref{zip constr fig} depicts an example of the zipping construction associated to the
strong surjection whose factorization is depicted in \Cref{map factorization fig}. The
top row of CW complexes have face posets that, up to isomorphism, are the minor posets
of the lattices in \Cref{map factorization fig}. In that same figure
the maps~$F_i$ are induced by the
strong maps~$f_i$. The cells in red form the zippers that are zipped
to form the next CW complex (in alphabetic order (a) to (f)). Note, the two complexes~(c) and~(e)
are not minor posets. These posets are intermediary posets in the zipping construction.
\end{example}

In general, given a strong surjection~$f$ that identifies only two elements $x$ and $y$, the associated
minor poset zipping sequence is to zip the edge between $x$ and $y$
and then proceed upward in ranks zipping any newly created zippers.

\begin{figure}
\centering
\includegraphics[width=0.9\textwidth]{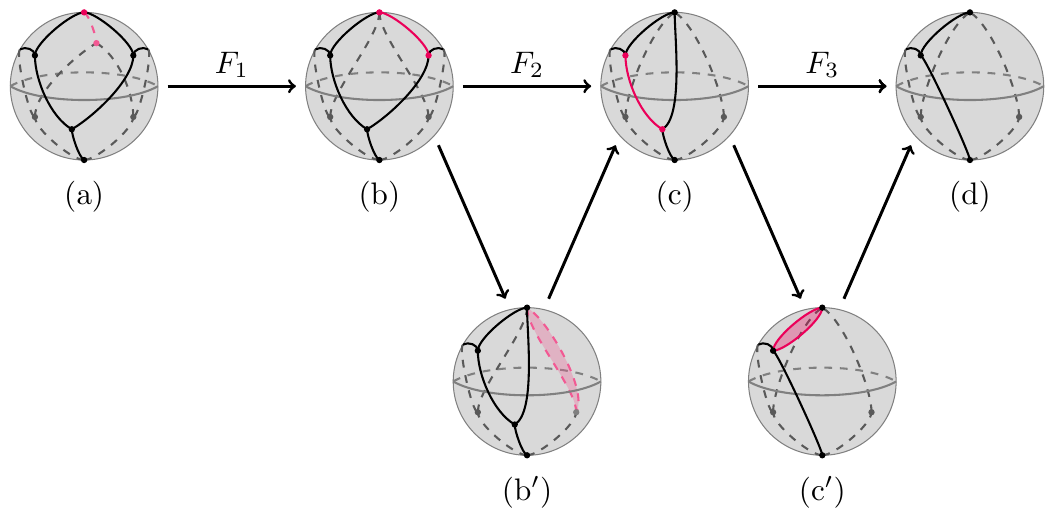}
\caption{The zipping sequence induced by the strong maps depicted in \Cref{map factorization fig}.}
\label{zip constr fig}
\end{figure}

\newcommand{\lesszip}{{<_{\text{zip}}}}

	\emph{Proof of \Cref{zip construction}:}
		Let~$f:(L,G)\rightarrow(K,H)$ be a strong surjection.
		By Lemma~\ref{strong map factorization} we may assume~$f$
		has a single nontrivial fiber~$\{x\prec y\}$.
		Let~$F:\M(L,G)\rightarrow\M(K,H)$ be the map induced by~$f$.
		By \Cref{irreducible minor maps} the nontrivial fibers of~$F$ are
		triples~$\{(M,I),(M_x,I_x),(M_y,I_y)\}$ satisfying
		Conditions~\eqref{irr minor map fiber 1} through~\eqref{irr minor map fiber 3}.
		Let~$Z$ be the set of maximums of
		these nontrivial fibers of~$F$.
		Choose some total ordering~$\lesszip$ of~$Z$ with the property that
		if~$(M_1,I_1)$ and~$(M_2,I_2)$ are elements of~$Z$
		such that~$\rk(M_1,I_1)<\rk(M_2,I_2)$
		then~$(M_1,I_1)\lesszip(M_2,I_2)$. The poset~$\M(K,H)$ is obtained
		from~$\M(L,G)$ by identifying each of the
		nontrivial fibers;
		it will be shown these identifications may be made by
		zipping operations done in the order~$\lesszip$.
				
		Consider a nontrivial fiber~$\{(M,I),(M_x,I_x),(M_y,I_y)\}$ of
		the map~$F$. Let~$P$ be the poset
		obtained from~$\M(L,G)$ by zipping elements of~$Z$ in increasing order
		with respect to~$\lesszip$ up to but not including the step of zipping
		the element~$(M,I)$.
		The minor poset~$\M(L,G)$ is graded and thin
		by Lemma~\ref{minor posets are graded and thin},
		so Proposition~\ref{zips preserve graded and thin} implies that~$P$ is graded
		and thin as well.
		Let~$\pi:\M(L,G)\rightarrow P$ be the map
		induced by the zipping operations.
		By Remark~\ref{thin zipper remark}
		to show the triple~$\pi(M_x,I_x),\pi(M_y,I_y),\pi(M,I)$
		forms a zipper in~$P$ it suffices to
		show that~$\pi(M,I)$ only covers the elements~$\pi(M_x,I_x)$
		and~$\pi(M_y,I_y)$, and show that the join~$\pi(M_x,I_x)\join\pi(M_y,I_y)$
		in~$P$ is~$\pi(M,I)$.

		We first show that~$\pi(M,I)$ only covers~$\pi(M_x,I_x)$ and~$\pi(M_y,I_y)$.
		To this end, we claim any minor~$(M',I')$
		other than~$(M_x,I_x)$ or~$(M_y,I_y)$
		covered by~$(M,I)$ satisfies~$x,y\in I'\cup\{\widehat{0}_{M'}\}$.
		This claim is obvious when~$(M',I')$ is a deletion
		of~$(M,I)$. 
		As for contractions,
		since~$x$ is only covered by~$y$, for any~$z\in L$ either~$z\join x=z\join y$
		or~$z\le x$. Thus, contracting~$(M,I)$ by an element~$\ell\ne x$
		either fixes both elements~$x$ and~$y$ or removes more than one generator.
		This proves the claim.
		By construction, any minor~$(M',I')$ such that~$x,y\in I'$
		was the maximum
		of a zipper in the construction of~$P$
		and thus~$\rk_P(\pi(M',I'))<\rk_{\M(L,G)}(M',I')$.
		Hence the element~$\pi(M',I')$ is not covered by~$\pi(M,I)$ and
		we conclude~$\pi(M,I)$ only covers the elements~$\pi(M_x,I_x)$
		and~$\pi(M_y,I_y)$ in the poset~$P$.

		It remains to show~$\pi(M,I)=\pi(M_x,I_x)\join\pi(M_y,I_y)$ in~$P$.
		To this end, we first observe~$(M,I)=(M_x,I_x)\join(M_y,I_y)$
		in~$\M(L,G)$; keeping in mind that either~$\zerohat_{M_x}=\zerohat_{M_y}$
		or~$\zerohat_{M_x}$ is only covered by~$\zerohat_{M_y}$
		it is straightforward to check using Lemma~\ref{minor poset joins}
		that
			\[(M,I)=(M_x,I_x)\join(M_y,I_y)\]
		in~$\M(L,G)$.
		Now consider an element~$p\in P$ that is an upper bound
		for the elements~$\pi(M_x,I_x)$ and~$\pi(M_y,I_y)$.
		By construction, the fibers~$\pi^{-1}\circ\pi(M_x,I_x)$
		and~$\pi^{-1}\circ\pi(M_y,I_y)$
		are trivial. The fact that~$p>\pi(M_x,I_x)$ implies there
		is some minor~$(N_x,J_x)$ of~$(L,G)$
		satisfying,
			\begin{align*}
				(N_x,J_x)&>(M_x,I_x)\\
				\pi(N_x,J_x)&=p.
			\end{align*}
		Similarly,
		there is a minor~$(N_y,J_y)$ of~$(L,G)$
		satisfying,
			\begin{align*}
				(N_y,J_y)&>(M_y,I_y)\\
				\pi(N_y,J_y)&=p.
			\end{align*}
		If~$(N_y,J_y)=(N_x,J_x)$ then~$(N_x,J_x)\ge(M,I)$, hence~$p\ge\pi(M,I)$
		as desired.
		We show this equality must hold.

		Since~$p>\pi(M_x,I_x)$ we have
			\[\rk_P(p)\ge\rk_P(\pi(M_x,I_x))+1
			=\rk(M,I).
			\]
		By construction, no minors of~$(L,G)$ of rank greater than~$\rk(M,I)$
		are the maximum of a zipper in the construction of~$P$,
		so any element~$q\in P$ whose fiber~$\pi^{-1}(q)$ is nontrivial
		satisfies~$\rk(q)<\rk(M,I)$.
		Thus, the fiber~$\pi^{-1}(p)$ is trivial which completes the proof.
	\hfill\qedsymbol

\begin{corollary}
\label{minor posets are Eulerian and PL-spheres}
	Let $(L,G)$ be a {\genlatt}.
	\begin{itemize}
		\item[(a)]{The minor poset~$\M(L,G)$ is
			Eulerian.
		}
		\item[(b)]{If~$G\ne\emptyset$ %the proper part of~$\M(L,G)$ is a PL-sphere.
			the order complex
			~$\Delta(\M(L,G)\wout\{\emptyset,(L,G)\})$
			of the proper part
			of~$\M(L,G)$ is a PL-sphere. In particular,
			the minor poset $\M(L,G)$ is isomorphic to the
			face poset of a CW sphere.
		}
	\end{itemize}
\end{corollary}

	\begin{proof}
		Proposition~\ref{M(B_n) is cube} implies the minor poset~$\M(B_G,\irr(B_G))$
		is isomorphic to the face lattice~$Q_{\abs{G}}$ of the~$\abs{G}$-dimensional
		cube. In particular,
		the order complex
			\[\Delta(\M(B_G,\irr(B_G))\wout\{\emptyset,(B_G,\irr(B_G))\})\]
		is a PL-sphere.
		
		There is a strong surjection from~$(B_G,\irr(B_G))$ onto~$(L,G)$, namely the 
		canonical strong map defined in \Cref{canonical strong map def}.
		Theorem~\ref{zip construction} implies the minor poset~$\M(L,G)$
		can be obtained from the poset~$Q_{\abs{G}}$ via zipping operations.
		Theorem~\ref{zips preserve PL-sphere} implies the
		order complex~$\Delta(\M(L,G)\wout\{\emptyset,\M(L,G)\}$ is a PL-sphere,
		which establishes (b).

		Part (a) follows from (b) except for the trivial case $G=\emptyset$
		when $\M(L,G)$ is a 2~element chain.
	\end{proof}

\section{\cv\dv-index inequalities}
\label{inequality section}

In this section we deduce inequalities between \cv\dv-indices
of minor posets. In particular, we establish that the \cv\dv-indices of minor
posets of a fixed rank~$n$ are bounded above by the face lattice of
the $n$-cube. In contrast, in \cite[Corollary 7.5]{billera-ehrenborg-readdy-97}
it was shown that the face lattice of the $n$-cube achieves the minimum
\cv\dv-index across all lattices of regions of oriented matroids of
a fixed rank. We also establish in the special case of minimally
generated join extremal lattices that the minimum \cv\dv-index
is achieved by the Boolean algebra. This has some overlap with
\cite[Corollary 1.3]{gorlatts}
which states the minimum \cv\dv-index across all Gorenstein* lattices
of a fixed rank is achieved by the Boolean algebra.
Note, not every join extremal lattice has a minor poset that is itself
a lattice. For example, the minor poset of the {\genlatt}
\Cref{gl_tam_3}, which is depicted as a CW sphere
in \Cref{zip constr fig} (d), is not a lattice.

\begin{corollary}
\label{minor poset cd-index inequalities}
	Let~$(L,G)$ be a {\genlatt} with~$n$ generators. The following inequalities
	hold coefficientwise among the {\bf cd}-indices:
	\[0\le\Psi(\M(L,G))\le\Psi(Q_n)=\Psi(\M(B_n,\irr(B_n))),\]
	where $Q_n$ denotes the face lattice of the $n$-dimensional cube.
\end{corollary}

	\begin{proof}
		The left-hand inequality is implied
		by \Cref{minor posets are Eulerian and PL-spheres}
		and Theorem~\ref{cd-index nonnegativity}. Theorem~\ref{cd-index nonnegativity}
		also applies to
		the face lattice of the~$n$-dimensional cube, to the intermediate
		posets in the zipping construction of~$\M(L,G)$ and to all intervals
		of these posets; as order complexes of the proper parts of these posets
		are PL-spheres. Theorem~\ref{zip cd-index inequalities} implies
		the {\bf cd}-index of the minor poset~$\M(L,G)$ may be obtained
		from the {\bf cd}-index of the face lattice of the~$n$-dimensional
		cube by subtracting terms that all have nonnegative coefficients.
	\end{proof}

\begin{corollary}
\label{strong map cd-index inequalities}
	Let~$(L,G)$ and~$(K,H)$ be {\genlatts} such that there is a strong
	surjection from~$(L,G)$ onto~$(K,H)$. The following inequality
	of {\bf cd}-indices is satisfied coefficientwise:
	\begin{align}\label{minor posets cd-index inequality}
	\Psi(\M(K,H))\cdot {\bf c}^{\lvert{G}\rvert-\lvert{H}\rvert}
	\le\Psi(\M(L,G)).\end{align}
\end{corollary}

	\begin{proof}
		First consider the case where~$\lvert{G}\rvert=\lvert{H}\rvert$.
		By Theorem~\ref{zip construction} we have a sequence of zipping operations
		that takes the minor poset~$\M(L,G)$ to~$\M(K,H)$.
		The minor poset~$\M(L,G)$ has proper part a PL-sphere, hence
		so does every intermediate poset resulting from the
		sequence of zipping operations.
		By Theorem~\ref{cd-index nonnegativity} every intermediate poset,
		and every interval of every intermediate poset,
		has a~{\bf cd}-index with nonnegative coefficients.
		By assumption~$\rk(\M(L,G))=\rk(\M(K,H))$ so no zipping operation
		involves the maximal element.
		Thus, Theorem~\ref{zip cd-index inequalities} (a) implies each
		zipping operation corresponds, on the level of~{\bf cd}-indices,
		to subtracting off some~{\bf cd}-polynomial
		with nonnegative coefficients.
		Therefore, we have~$\Psi(\M(K,H))\le\Psi(\M(L,G))$
		coefficientwise when~$\lvert{G}\rvert=\lvert{H}\rvert$.

		Now consider the case~$\abs{G}=\abs{H}+1$ and let~$f:(L,G)\rightarrow(K,H)$
		be a strong surjection. We claim that the map~$f$ may be factored as in \Cref{strong map factorization}
		as~$f=f_r\circ\dots\circ f_1$ so that only the map~$f_r$ reduces the number
		of generators. The assumption~$\abs{G}=\abs{H}+1$ implies there
		is a generator~$g\in G$ such that~$f(g)=f(x)$ for some~$x\in G\cup\{\zerohat_L\}$
		and furthermore, the edge~$(x,x\join g)$ is minimal in~$\mcE(f)$.
		Removing the edge~$(x,x\join g)$ from the map~$f$ produces a map that does
		not change the number of generators which proves the claim.

		Set~$(M,I)=f_{r-1}\circ\dots\circ f_1(L,G)$, so that~$f_r(M,I)=(K,H)$. In the
		zipping sequence to construct~$\M(K,H)$ from~$\M(M,I)$ only the final zipping
		operation involves the maximum.
		Letting~$P$ be the poset obtained from~$\M(M,I)$
		by performing the zipping operations up to, but not including this final step,
		we have
			\[\Psi(P)=\Psi(\M(K,H))\cdot\cv.\]
		Furthermore, the same reasoning as used above for the case~$\abs{G}=\abs{H}$
		establishes $\Psi(P)\le\Psi(\M(M,I))$, hence,~$\Psi(\M(K,H))\cdot\cv\le\Psi(\M(M,I))$.
		The previous case implies
		\[\Psi(\M(K,H))\cdot\cv\le\Psi(\M(M,I))\le\Psi(\M(L,G)).\]

		The general case now follows from induction.

	\end{proof}

\begin{remark}
	It would be too much to expect the converse of \Cref{minor poset cd-index inequalities} to hold,
	that is, to expect {\bf cd}-index inequalities for minor posets to
	imply the existence of
	strong surjections between
	the associated {\genlatts}. Indeed, the converse is false.
	As a counterexample, let~$(L,G)$ be the {\genlatt} depicted
	in \Cref{gl_tam_3} and let~$(K,H)$ be the
	{\genlatt} depicted in \Cref{gl_U_2_3}.
	There is no strong surjection
	from~$(L,G)$ onto~$(K,H)$, but the inequality is satisfied since
	\begin{align*}\Psi(\M(L,G))&={\bf c}^3+2{\bf cd}+3{\bf dc},\\
	\Psi(\M(K,H))&={\bf c}^3+{\bf cd}+3{\bf dc}.\end{align*}
\end{remark}

A direct appeal to inequality (\ref{minor posets cd-index inequality}) cannot establish which {\genlatts} achieve
minimal \cv\dv-indices. For example, the lattices depicted in \Cref{gl_C_3} and \Cref{gl_B_2}
(the chain and the rank~2 Boolean algebra)
admit no strong surjection onto another {\genlatt} with~3 generators, but there is a coefficientwise
inequality since the \cv\dv-indices are $\cv^3+2\cv\dv+2\dv\cv$ and $\cv^3+\cv\dv+2\dv\cv$.

We make a conjecture as to what {\genlatts} achieve the
minimum \cv\dv-index across minor posets of a fixed rank.
Let~$A_n$ denote the lattice obtained from an antichain
of size~$n$ by adjoining a minimum and maximum and let $\mcA_n=(A_n,A_n\wout\{\zerohat\})$. The diagram of the
{\genlatt}~$\mcA_2$ is depicted in \Cref{gl_B_2}.
Among {\genlatts} with~3 generators the {\genlatt} $\mcA_2$
has minor poset
with minimum \cv\dv-index.
Similarly, the minor poset of $\mcA_3$ achieves the minimum \cv\dv-index
among minor posets of {\genlatts} with~4 generators.
This case was checked with a computer using
\Cref{join relations and lift order ideals} to enumerate all~134 {\genlatts}.
We conjecture this behavior continues.

\begin{conjecture}
\label{cd min}
	The set of~$\cv\dv$-indices of minor posets of {\genlatts} with~$n$ generators
	is coefficientwise minimized by the~$\cv\dv$-index~$\Psi(\M(\mcA_{n-1})$.
\end{conjecture}

Recall the flag~$f$-vector of a rank $n+1$ poset with maximum and minimum is the vector $f=f(P)$
with entries $f_S$ equal to the number of chains $C$ in $P$ containing the maximum and minimum and
such that $\{\rk(c):c\in C\wout\{\zerohat,\onehat\}\}=S$. Coefficientwise inequalities between \cv\dv-indices
of two posets implies entrywise inequalities between the flag $f$-vectors, but the converse does not hold in general.
We show the {\genlatts} $\mcA_n$ have minor posets with flag $f$-vectors achieving the minimum among all {\genlatts}
with $n$ generators. This supports \Cref{cd min}.

\begin{proposition}
\label{f-vector min}
	The set of flag $f$-vectors of minor posets of {\genlatts} with~$n$ generators
	is entrywise minimized by the flag $f$-vector of $\M(\mcA_n)$.
\end{proposition}

	\begin{proof}
		We give a rank set preserving injection from the set of chains in $\M(\mcA_n)$ into the set of chains in $\M(L,G)$
		for any given {\genlatt} $(L,G)$ with $n$ generators.
		To define our injection $\phi$
		fix any linear extension $g_1,\dots,g_n$, of $G$
		and fix a linear extension $a_1,\dots,a_n=\onehat$, of $A_n\wout\{\zerohat\}$. Consider a chain
		$C=\{\mcA_n>(K_1,H_1)>\dots>(K_r,H_r)>\emptyset\}$. There are six cases to consider:
			\begin{enumerate}
				\item{$(K_i,H_i)=\mcA_n\wout\{a_j:j\in J_i\}$ for $1\le i\le r$,
				in which case
				\[\phi(\mcA_n\wout\{a_j:j\in J_i\}) = (L,G)\wout\{g_j:j\in I\}\] for $1\le i\le r$.
				}
				\item{$(K_i,H_i) = \mcA_n\wout\{a_j:j\in J_i\}$ for $1\le i\le r-1$,
					in which case
					\[\phi(\mcA_n\wout\{a_j:j\in J_i\}) = (L,G)\wout\{g_j:j\in I\}\] for $1\le i\le r-1$
					and either
					\begin{enumerate}
						\item{
						$(K_r,H_r) = \langle\emptyset|\onehat_{A_n}\rangle$ in which case
						\[\phi(K_r,H_r) = \langle\emptyset|\onehat_L\rangle,\]
						}
						\item{
						$(K_r,H_r) = \langle\emptyset|a_k\rangle$ for some $k\in[n-1]$, in which case
						\[\phi(K_r,H_r) = \langle\emptyset|g_k\rangle,\]
						}
						\item{
						or $(K_r,H_r) = \langle\onehat_{A_n}|a_k\rangle$ for some $k\in [n-1]$, in which case
						\[\phi(K_r,H_r) = \langle g_n\join g_k|g_k\rangle.\]
						}
					\end{enumerate}
				}
				\item{
				$(K_i,H_i)=\mcA_n\wout\{a_j:j\in J_i\}$ for $1\le i\le r-2$ and $(K_{r-1},H_{r-1})=\langle\onehat_{A_n}|a_k\rangle$
				for some $k\in [n-1]$ in which case
				\[\phi(K_i,H_i) = (L,G)\wout\{g_j:j\in J_i\}\] for $1\le i\le r-2$ and
				$\phi(K_{r-1},H_{r-1})=\langle g_n\join g_k|g_k\rangle$. Furthermore, either
				\begin{enumerate}
					\item{$(K_r,H_r) = \langle\emptyset|a_k\rangle$ in which case
						\[\phi(K_r,H_r)=\langle\emptyset|g_k\rangle,\]
					}
					\item{or $(K_r,H_r) = \langle\emptyset|\onehat_{A_n}\rangle$ in which case
						\[\phi(K_r,H_r)=\langle\emptyset|g_n\join g_k\rangle.\]
					}
				\end{enumerate}
				}
			\end{enumerate}
	\end{proof}

We will prove a special case of \Cref{cd min}, but first we examine join extremal
lattices which satisfy a tighter lower bound.
Recall the \emph{length} of a lattice~$L$ is the length of the longest chain in~$L$.
The length is at most the number of join-irreducibles and a lattice whose length is equal to the number of
join-irreducibles is called \emph{join extremal}. Markowsky characterized join extremal lattices
in \cite[Theorem 11, Theorem 12]{extremal-latts} and showed Tamari lattices are join extremal.
In \cite{avann} Avann gave many equivalent characterizations of meet-distributive lattices and in particular
showed meet-distributive lattices are precisely ranked join extremal lattices.

\begin{lemma}
\label{long lattices surject onto chain}
	Let~$L$ be a join extremal lattice with $n$ join-irreducibles.
	There is a strong surjection
	from~$(L,\irr(L))$ onto the chain~$(C_n,\irr(C_n))$ of length~$n$.
\end{lemma}

	\begin{proof}
		Let~$\zerohat=\ell_0<\dots<\ell_n=\onehat$ be a chain in~$L$ of length~$n$.
		Define a map
			\[
			\theta:(L,\irr(L))\rightarrow(C_n,\irr(C_n))
			\]
		by
			\[
			\theta(\ell)=\min\{i\in\{0,\dots,n\}:\ell_i\ge\ell\}.
			\]
		First, we show~$\theta(\irr(L)\cup\{\zerohat_L\})=C_n$. For each~$i=1,\dots,n$ there is an irreducible~$g_i\in\irr(L)$
		such that~$\ell_{i-1}\join g_i=\ell_i$,
		hence~$\ell_i\ge g_i$ and~$\ell_{i-1}\not\ge g_i$;
		furthermore,~$\theta(\ell_i)=\theta(g_i)$.
		Thus, since~$\theta$ maps the chain~$\{\ell_0,\dots,\ell_n\}$ onto~$C_n$
		it also maps~$\irr(L)$ onto~$C_n$.

		To show~$\theta$ is join-preserving take~$\ell,k\in L$.
		The image of the join~$\ell\join k$ is
			\begin{align*}
			\theta(\ell\join k)&=\min\{i\in\{0,\dots,n\}:\ell_i\ge\ell\join k\}\\
			&=\min\{i\in\{0,\dots,n\}:\ell_i\ge\ell,\ \ell_i\ge k\}\\
			&=\max\{\theta(\ell),\theta(k)\}\\
			&=\theta(\ell)\join\theta(k).
			\end{align*}
	\end{proof}

\begin{corollary}
\label{boolean algebra minimizes no parallel cd-indices}
	The class of~{\bf cd}-indices of minor posets of {\genlatts}~$(L,\irr(L))$
	for which $L$ is join extremal with length $n$
	is coefficientwise minimized
	by the~{\bf cd}-index of the rank~$n+1$ Boolean algebra.
\end{corollary}

	\begin{proof}
		Lemma~\ref{long lattices surject onto chain} and
		\Cref{minor poset cd-index inequalities} imply
		\[\Psi(B_{n+1})=\Psi(\M(C_n,\irr(C_n)))\le\Psi(\M(L,G)).\]
	\end{proof}

\begin{proposition}
\label{cd min partial}
	Let $(L,G)$ be a {\genlatt}. If any of the following conditions hold
	then $\Psi(\M(\mcA_{\abs{G}}))\le\Psi(\M(L,G))$.
	\begin{enumerate}
		\item[(i)]{At most one generator is not an atom of $L$.}
		\item[(ii)]{$G=\irr(L)$ and $L$ is join extremal.}
		\item[(iii)]{There exists an atom $a$ of $L$ such that
			$a\not< g$ for all $g\in G$.
			}
		\item[(iv)]{There is a coatom $c$ of $L$ that is a generator
			and the interval $[\langle \onehat|c\rangle,(L,G)]$
			in the minor poset has a \cv\dv-index bounded below by
			the Boolean algebra of rank $\abs{G}-1$.
			}
	\end{enumerate}
\end{proposition}

	\begin{proof}
		In Case (i) the lattice admits a strong surjection onto $\mcA_n$
		by sending all elements except the atoms to $\onehat_{A_n}$.
		If all generators are atoms then we also choose one atom arbitrarily
		to map to $\onehat$. Thus, Case (i) follows from
		\Cref{minor poset cd-index inequalities}.

		Now we reduce Cases (ii) and (iii) to that of Case (iv).
		In Case (ii) the {\genlatt} $(L,G)$ admits a strong surjection
		onto the chain $(C_{\abs{G}},\irr(C_{\abs{G}}))$. It thus
		suffices to prove the result for chains which falls under
		Case (iv) since $\M(C_{\abs{G}},\irr(C_{\abs{G}}))$ is Boolean.

		To reduce Case (iii) to Case (iv) consider the relation $\alpha$ on $L$ defined by $\alpha(\ell)=\alpha(k)$ if
		either $\ell=k$ or $\ell>a$ and $k>a$. Since for any $x>a$ we have
		$x\join\ell >a$ for all $\ell\in L$ the relation $\alpha$ is join-preserving.
		By \Cref{minor poset cd-index inequalities} $\Psi(\M(\alpha(L,G)))\le\Psi(\M(L,G))$.
		Thus, by replacing $(L,G)$ with $\alpha(L,G)$ we may assume $a$ is both an atom
		and a coatom of $L$. The interval $[\langle\onehat_L|a\rangle,(L,G)]$
		of $\M(L,G)$ is Boolean as witnessed by the map that sends
		$X\subsetneq G\wout\{a\}$ to the deletion by $X$ and
		sends $G\wout\{a\}$ to $\langle\onehat_L|a\rangle$.

		To prove Case (iv) we consider the zipping sequence induced by the map that identifies an atom of
		$\mcA_n$ with $\onehat_{A_n}$. We will compare this zipping sequence to the sequence corresponding
		to identifying the (co)atom $a$ with $\onehat_L$. By showing that the latter
		sequence reduces the \cv\dv-index more than the former the result will follow by induction.
		The base case $n=2$ is obvious as there are only the {\genlatts} $\mcA_1=(C_2,\irr(C_2))$ and
		$(B_2,\irr(B_2))$.

		Let the atoms of $\mcA_n$ be $a_1,\dots,a_{n-1}$.
		The zippers in the zipping sequence induced by the map $\mcA_n\rightarrow\mcA_{n-1}$
		are all triples $\{(M_{a_{n-1}},I_{a_{n-1}}),(M_\onehat,I_\onehat),(M,I)\}$ such that
		$\{a_{n-1},\onehat\}\subseteq I\cup\{\zerohat_M\}$, $(M_{a_{n-1}},I_{a_{n-1}})=(M,I)\wout\{\onehat\}$
		and $(M_\onehat,I_\onehat)$ is obtained from $(M,I)$ by removing $a_{n-1}$. Such triples come from deletions
		$(M,I)$ of $\mcA_n$ containing both $\onehat$ and $a_n$ as generators and from the
		contraction $\mcA_n/\{a_{n-1}\}$. On the other hand, the sequence collapsing $(L,G)$ has zippers for deletions
		containing both $a$ and $\onehat_L$ as zippers. There is also a zipper for the
		contraction $(L,G)/\{a\}$. Possibly there are more zippers, but we will show the contribution
		from those listed exceeds the total contribution from the zippers in the sequence collapsing
		$\mcA_n$ to $\mcA_{n-1}$.

		Let $\beta$ be the relation on $(L,G)$ identifying $a$ with $\onehat$.
		Consider a deletion $(L,G)\wout X$ where $a,\onehat_L\not\in X\subseteq G$. At the point
		of zipping the triple
		\[(L,G)\wout X,\ \ \  (L,G)\wout (X\cup\{a\}),\ \ \ (L,G)\wout(X\cup\{\onehat_L\})\]
		the lower interval \[[\emptyset,(L,G)\wout (X\cup\{a\})]\]
		in the intermediate poset created by the zipping operations
		is isomorphic to the interval $[\emptyset,(L,G)\wout(X\cup\{\onehat\})]\subseteq
		\M(\beta(L,G))$. By induction, $\Psi(\M(\beta(L,G)))\ge\Psi(\M(\mcA_{n-1}))$.
		The upper interval $[(L,G)\wout X,(L,G)]$ in the zipped poset is isomorphic to the interval $[(L,G)\wout X,(L,G)]\subseteq\M(L,G)$
		of the same name. This interval is Boolean of rank $\abs{X}$. The same is true of $\mcA_n$,
		that is, the interval $[\mcA_n\wout X',\mcA_n]$ in the intermediary is Boolean of
		rank $\abs{X'}$ for $X'\subseteq\{a_1,\dots,a_{n-1}\}$. This establishes the contribution
		from all zippers with top element a deletion is bounded below by the contribution from the
		same zippers in the sequence collapsing $\mcA_n$.

		Now consider the zipper with top element $\langle \onehat|a\rangle$. The lower interval
		$[\emptyset,\langle\emptyset|\onehat\rangle]$ in the zipped poset
		is isomorphic to the interval $[\emptyset,\langle\emptyset|\onehat\rangle]\subseteq\M(\beta(L,G))$ which is Boolean of rank~2. The interval
		$[\emptyset,\langle\emptyset|\onehat_{A_{n-1}}\rangle]_{\M(\mcA_{n-1})}$ is also Boolean of rank~2.
		The upper interval $[\langle\onehat|a_n\rangle,\mcA_n]$ in $\M(\mcA_n)$ is Boolean
		of rank $n-1$ ($\mcA_n$ itself falls under Case (iii)). Since the
		interval $[\langle\onehat_L|a\rangle,(L,G)]$ in $\M(L,G)$ is
		also Boolean the zipping sequence for $\M(\mcA_n)$ reduces the
		\cv\dv-index more than that of $\M(L,G)$.
	\end{proof}

There are {\genlatts} that do not satisfy Case (iv) of \Cref{cd min partial}.
In order to establish a lower bound on the \cv\dv-index for all minor posets
one only need consider {\genlatts} $(L,G)$ for which $G=L\wout\{\zerohat\}$;
any {\genlatt} admits a strong surjection onto one of these minimal {\genlatts}
without reducing the number of generators. The known examples that do not
satisfy Case (iv) are not minimal in this sense. It is thus an interesting
question as to whether there exists a {\genlatt} $(L,G)$ for which
$G=L\wout\{\zerohat\}$ and the interval $[\langle\onehat|c\rangle,(L,G)]$
of $\M(L,G)$ is not bounded below by the \cv\dv-index of the Boolean algebra.
A negative answer to this question would resolve \Cref{cd min} in the affirmative
while a positive answer is not enough to refute it.

\section{Acknowledgments}
	The author thanks Richard Ehrenborg and Margaret Readdy for comments. The majority of this research was supported by the University of Kentucky. The author also thanks the University of Minnesota for support.
	The python package Posets \cite{posets-package} was used to generate \Cref{3-cycle minor poset fig,tam 3 minor poset fig} and to check \Cref{cd min}.

\nocite{birkhoff}
\nocite{edmondsReprint}
\bibliography{bib}{}
\bibliographystyle{plain}
\appendix
\hypertarget{zipping-appendix}{}
\section*{Appendix}
\setcounter{section}{1}
\renewcommand{\thesection}{\Alph{section}}
This appendix contains background on the zipping operation introduced by Reading
used in the proof of \Cref{zip construction}. We also review some basic PL-topology as it
relates posets and give a brief presentation of the \cv\dv-index.

A simplicial complex~$\Delta$ is a \emph{PL-sphere} when there is a piecewise
linear homeomorphism from~$\Delta$ to the boundary of a simplex.
A basic fact of PL-topology is that order complex of the proper part of the face lattice of any
polytope is a PL-sphere.

The \emph{link} of a face~$X$ in a simplicial complex~$\Delta$
is the subcomplex
\[\link_\Delta(X)=\{Y\in\Delta:Y\cap X=\emptyset\text{ and }Y\cup X\in\Delta\}.\]

\begin{lemma}[{Hudson \cite[Corollary 1.16]{Hudson}}]
\label{PL-sphere links are PL-spheres}
	Given a simplicial complex~$\Delta$ that is a PL-sphere,
	for any face~$X$ of~$\Delta$ the link~$\link_\Delta(X)$ is a PL-sphere.
\end{lemma}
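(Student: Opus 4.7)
The plan is to trace through a classical PL-topology argument, of the sort that underlies Hudson's treatment. The key tool is PL-invariance of links of points: a PL-homeomorphism $f : |K| \to |L|$ between simplicial complexes restricts near any $p \in |K|$ to a PL-homeomorphism between $\link_{|K|}(p)$ and $\link_{|L|}(f(p))$, a fact built into the machinery of regular neighborhoods. I would invoke this together with a PL-homeomorphism $\phi : |\Delta| \to |\partial \sigma^n|$ witnessing that $\Delta$ is a PL-sphere.

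For the given face $X$ of $\Delta$, pick an interior point $p \in X$. Unwinding the definitions yields the simplicial identification
\[\link_{|\Delta|}(p) \;\cong\; \partial X \ast \link_\Delta(X),\]
where $\ast$ denotes the simplicial join. On the target side, after passing to a common subdivision that makes $\phi(p)$ a vertex, the link of $\phi(p)$ in $|\partial \sigma^n|$ is explicitly the boundary of a simplex (it is the join of the boundary of the carrying simplex with the link of that simplex in $\partial \sigma^n$, both of which are simplex boundaries). Hence it is a standard PL-sphere, and so $\partial X \ast \link_\Delta(X)$ is a PL-sphere.

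The final step is to cancel the $\partial X$ factor, using the lemma that if $A \ast B$ is a PL-sphere and $A$ is a PL-sphere, then $B$ is a PL-sphere. I would prove this by induction on $\dim A$, with base case $A = S^0$ where the claim reduces to the statement that if the suspension $\Sigma B$ is a PL-sphere then so is $B$; the general case then follows by iterated suspension, since $\partial X$ decomposes as a join of lower-dimensional PL-spheres. I expect this join-cancellation step to be the main technical obstacle, as setting up the induction requires care to avoid circularity with the link-invariance property used above. This is essentially why Hudson devotes substantial preparation to establishing these facts in the cited chapter, and why the paper simply invokes his result as a black box.
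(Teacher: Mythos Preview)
The paper does not supply a proof of this lemma: it is stated with the attribution to Hudson and used as a black box. Your own final sentence anticipates this correctly. There is therefore nothing in the paper against which to compare your argument.

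For what it is worth, your sketch is a reasonable outline of the classical route: identify the link of an interior point $p\in X$ with the join $\partial X \ast \link_\Delta(X)$, transport through the PL-homeomorphism to $\partial\sigma^n$ to see this join is a PL-sphere, then invoke join-cancellation. You are also right that the join-cancellation step (if $A\ast B$ is a PL-sphere and $A$ is a PL-sphere then $B$ is one) and the PL-invariance of point links are the places where real work hides, and that making these precise without circularity is exactly what Hudson's early chapters set up. One small caution: the phrase ``PL-invariance of links of points'' needs to be stated carefully as a statement about PL-homeomorphism types of links in suitable subdivisions, not a naive claim about combinatorial links, but you seem aware of this.
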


Let~$P$ be a poset with~$\widehat{0}$ and~$\widehat{1}$
where the proper part is a PL-sphere and let~$[x,y]$ be an interval
of~$P$. The order complex~$\Delta((x,y))$ is the link~$\link_{\Delta(P)}(C)$
where~$C$ is the union of a maximal chain in~$[\widehat{0},x]$ and a maximal
chain in~$[y,\widehat{1}]$. Thus, by Lemma~\ref{PL-sphere links are PL-spheres}
the order complex~$\Delta((x,y))$ is a PL-sphere.
In summation, if~$P$
is a poset such that the proper part is a PL-sphere then the proper part of
any closed interval of~$P$ is a PL-sphere as well. In particular, any poset
whose proper part is a PL-sphere is Eulerian.

The following definition and theorem due to Bj\"orner
characterize face posets of regular CW complexes.

\begin{definition}[{Bj\"orner \cite[Definition 2.1]{bjorner}}]
\label{CW poset}
	A poset~$P$ is a CW poset if the following three
	conditions hold:
		\begin{enumerate}
			\item{$P$ has a minimum~$\widehat{0}$
			and a maximum~$\widehat{1}$.}
			\item{$\lvert{P}\rvert\ge3$.}
			\item{$\Delta((\widehat{0},p))$ is homeomorphic to a sphere
			for all elements~$p$ in the open interval~$(\widehat{0},\widehat{1})$.}
		\end{enumerate}
\end{definition}

The definition we give differs slightly from the one given in
\cite{bjorner} since in the present context the maximum of face posets
does not correspond to a cell.

\begin{theorem}[{Bj\"orner \cite[Proposition 3.1]{bjorner}}]
\label{CW posets are face posets}
	A poset is a CW poset if and only if it is isomorphic
	to the face poset of a regular CW complex.
\end{theorem}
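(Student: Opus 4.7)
I would prove the two implications separately. The ``if'' direction is a direct verification of the three axioms of Definition~\ref{CW poset}, while the ``only if'' direction requires an inductive construction of the CW complex one rank at a time.

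For the easy direction, suppose $P$ is the face poset of a regular CW complex~$K$, with the empty face (or adjoined minimum) serving as~$\widehat{0}$ and an adjoined~$\widehat{1}$ on top. Conditions~1 and~2 hold by construction whenever $K$ has at least one cell. For Condition~3, fix a cell $\sigma$ of $K$ corresponding to $p\in(\widehat{0},\widehat{1})$. The open interval $(\widehat{0},p)$ is precisely the face poset of the boundary subcomplex $\partial\sigma$, which by regularity of $K$ is a regular CW decomposition of the sphere $S^{\dim\sigma-1}$. Invoking the classical fact that the order complex of the face poset of a regular CW complex is homeomorphic to the underlying space of the complex (realized concretely by iterated barycentric subdivision of the cells), we get $|\Delta((\widehat{0},p))|\cong|\partial\sigma|\cong S^{\dim\sigma-1}$, as required.

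For the harder direction, suppose~$P$ satisfies the three CW poset axioms. I would construct a regular CW complex realizing $P\setminus\{\widehat{1}\}$ by induction on the rank of $p$. The base case is trivial: no cell corresponds to~$\widehat{0}$. Inductively, assume we have built a regular CW complex $K_{<p}$ whose augmented face poset is $[\widehat{0},p)$. By the realization fact applied in the induction, $|K_{<p}|\cong|\Delta((\widehat{0},p))|$, which by Condition~3 is homeomorphic to a sphere~$S^{k}$ where $k+1$ is the dimension of the cell we are about to introduce. Choose any homeomorphism $\varphi\colon\partial B^{k+1}\to|K_{<p}|$ and attach a new $(k+1)$-cell via $\varphi$. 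Crucially $\varphi$ is a \emph{homeomorphism}, not merely continuous, so the characteristic map on all of $B^{k+1}$ is a homeomorphism; the attached cell is therefore regular. Performing this attachment for every element of each rank (in any order, since attaching maps only reference strictly lower ranks already in place) produces the desired regular CW complex, whose face poset is~$P$ by construction.

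The main obstacle, and the step that makes regularity indispensable, is the use of $|K_{<p}|\cong|\Delta((\widehat{0},p))|$ in the inductive step. This comparison relies on the fact that for any regular CW complex the geometric realization of the order complex of its face poset is homeomorphic to the complex itself; without regularity of every cell built so far, the induction hypothesis would not deliver a sphere to attach against, and the combinatorial Condition~3 would not translate into a usable attaching map. A secondary subtlety is that the choice of $\varphi$ could a priori matter, but the face poset records only cell incidences, and any two attachments via homeomorphism to the same regular subcomplex of spherical shape yield CW structures with identical face posets; hence the construction is well-defined up to cell-preserving homeomorphism, which is all that is needed.
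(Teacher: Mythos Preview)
The paper does not supply its own proof of this statement; it is quoted verbatim as a background result from Bj\"orner~\cite{bjorner} and used as a black box. Your argument is essentially Bj\"orner's original proof: verify the axioms for the face poset of a regular CW complex using that the order complex of the face poset is the barycentric subdivision (hence homeomorphic to the underlying space), and conversely build the complex skeleton by skeleton, attaching each new cell along a homeomorphism to the already-constructed sphere guaranteed by Condition~3. This is correct, and there is nothing in the present paper to compare it against.

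One small point worth tightening: your inductive hypothesis is phrased as ``assume we have built a regular CW complex $K_{<p}$ whose augmented face poset is $[\widehat{0},p)$,'' but the induction is really over the rank, building the entire $k$-skeleton at once; then for each $p$ of the next rank the subcomplex $K_{<p}$ sits inside that skeleton, and its barycentric subdivision is $\Delta((\widehat{0},p))$. You also implicitly use that the poset $P\setminus\{\widehat{1}\}$ is graded, which is not part of Definition~\ref{CW poset} but follows from Condition~3 since a simplicial sphere is pure. Making these two points explicit would remove any ambiguity.
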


A graded poset~$P$ with~$\widehat{0}$ and~$\widehat{1}$ is said to
be \emph{Eulerian} if for all~$x<y$ in~$P$,
	\[\sum_{x\le z\le y}(-1)^{\rk(z)}=0.\]
Equivalently, the M\"obius function of~$P$
satisfies~$\mu(x,y)=(-1)^{\rk(y)-\rk(x)}$ for all $x\le y$. Face lattices of polytopes are the motivating
examples of Eulerian posets. More generally, face posets of regular CW spheres
are Eulerian posets.

The zipping operation was introduced by Reading in \cite[Section 4]{reading} as a tool
to study Bruhat intervals.

\begin{definition}
\label{zip def}
	Let~$P$ be a poset and let~$x,y,z\in P$. The triple~$x,y,z$ is said to form a
	\emph{zipper} if the following three conditions hold:
	\begin{enumerate}
		\item[(i)]{$z$ covers only the two elements~$x$ and~$y$.\label{zip condition 1}}
		\item[(ii)]{$z=x\join y$.}\label{zip condition 2}
		\item[(iii)]{$\{p\in P:p< x\}=\{p\in P:p<y\}$.}\label{zip condition 3}
	\end{enumerate}
\end{definition}

Let~$P$ be a poset with a zipper~$x,y\prec z$. 
The poset~$\zip(P,z)$ has underlying set obtained from~$P$ by replacing
the elements~$x,y$ and~$z$ with a new element~$w$.
The order relation of~$\zip(P,z)$ is defined by the following three conditions
that hold for all~$p$ and~$q$ in~$P\setminus\{x,y,z\}$.
	\begin{enumerate}
	\item[\eqlabel{zipped poset 1}]
		{$p\le w$ in~$\zip(P,z)$ if and only if~$p\le z$ in~$P$.}
	\item[\eqlabel{zipped poset 2}]
		{$w\le p$ in~$\zip(P,z)$ if and only if~$x\le p$ or~$y\le p$ in~$P$.}
	\item[\eqlabel{zipped poset 3}]
		{$p\le q$ in~$\zip(P,z)$ if and only if~$p\le q$ in~$P$.}
	\end{enumerate}
When~$P$ and~$\zip(P,z)$ are graded, the rank functions
satisfy~$\rk_{\zip(P,z)}(w)=\rk_P(z)-1$.

\begin{figure}
	\def\r{0.25}
	\centering
	\hfill
	\begin{subfigure}[t]{0.45\textwidth}
		\centering
		\includegraphics[width=\r\textwidth]{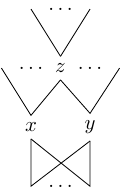}
		\caption{The Hasse diagram of~$P$.}
	\end{subfigure}
	\hfill
	\begin{subfigure}[t]{0.45\textwidth}
		\centering
		\includegraphics[width=\r\textwidth]{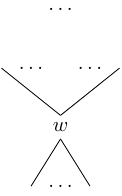}
		\caption{The Hasse diagram of the zipped poset~$\zip(P,z)$.}
	\end{subfigure}
	\hfill

	\vspace{\baselineskip}
	
	\hfill
	\begin{subfigure}[t]{0.45\textwidth}
		\centering
		\includegraphics[width=\r\textwidth]{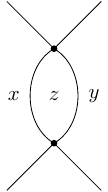}
		\caption{A cell complex with a 2-dimensional zipper.}
	\end{subfigure}
	\hfill
	\begin{subfigure}[t]{0.45\textwidth}
		\centering
		\includegraphics[width=\r\textwidth]{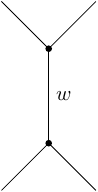}
		\caption{The zipped cell complex.}
	\end{subfigure}
	\hfill
\caption{A schematic picture of a zipper in a poset and in a cell complex.}
\end{figure}

Recall, a graded poset is said to be \emph{thin} if all length two intervals
are isomorphic to the Boolean algebra~$B_2$.

\begin{remark}
\label{thin zipper remark}
	If~$P$ is a thin poset then~$x,y\prec z$ form a zipper
	whenever conditions~(i) and~(ii) in~Definition~\ref{zip def} are satisfied.
	Condition~(iii) in the same definition follows from thinness and condition~(i) (\cite[Proposition 4.8]{reading}).
\end{remark}

\begin{proposition}[{Reading \cite[Proposition 4.4]{reading}}]
\label{zips preserve graded and thin}
	If~$P$ is a graded and thin poset with a zipper~$x,y\prec z$ then the
	poset~$\zip(P,z)$ is graded and thin as well.
\end{proposition}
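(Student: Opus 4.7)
The plan is to first pin down the covering relations and rank function of $\zip(P,z)$, verify gradedness from these, and then handle thinness by a case analysis over length-2 intervals depending on where the new element $w$ sits.

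As a first step, I would set $\rk_{\zip(P,z)}(p)=\rk_P(p)$ for $p\ne w$ and $\rk_{\zip(P,z)}(w)=\rk_P(z)-1=\rk_P(x)=\rk_P(y)$, and then classify the covering relations in $\zip(P,z)$. Relations not involving $w$ coincide with relations in $P$, so those covers have rank difference $1$ automatically. For the remaining covers, the zipper conditions are exactly what is needed: condition (iii) ensures that $p\prec x$ in $P$ is equivalent to $p\prec y$ in $P$, and combined with thinness of $P$ applied to the length-$2$ interval $[p,z]\cong B_2$, one gets that $p\prec w$ in $\zip(P,z)$ iff $p\prec x$ (equivalently $p\prec y$) in $P$. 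Similarly, $w\prec q$ in $\zip(P,z)$ iff $z\prec q$ in $P$, using condition (ii) $z=x\vee y$ so that $x<q$ and $y<q$ together force $z\le q$. Once the covers are described, gradedness of $\zip(P,z)$ is checked by lifting a saturated chain in $\zip(P,z)$ to a saturated chain in $P$, inserting $x$ (or $y$) and $z$ where $w$ appears, and using gradedness of $P$.

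For thinness, I would enumerate length-$2$ intervals $[p,q]$ in $\zip(P,z)$ according to the role of $w$. If $w\notin[p,q]$ then the interval is literally an interval of $P$ and is $B_2$ by thinness of $P$. If $w=q$, so $[p,w]$ has length $2$, lifting gives a length-$3$ interval $[p,z]$ in $P$, and I would use thinness of $P$ (applied both to $[p,z]$ as a rank-$3$ interval via the rank-$2$ sub-intervals it contains) together with zipper condition (iii) to show that the two coatoms in $[p,z]\setminus\{x,y\}$ descend to two elements strictly between $p$ and $w$. The symmetric case $w=p$ is handled analogously using condition (ii) to translate between covers of $w$ upward and covers of $z$ upward. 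The genuinely new case is when $w$ lies strictly between $p$ and $q$; here I would use the cover descriptions to show $p\prec x$ (hence $p\prec y$) and $z\prec q$ in $P$, then apply thinness of $P$ to the rank-$3$ interval $[p,q]$ to identify exactly one further element $r$ strictly between $p$ and $q$ other than those collapsed to $w$, giving $[p,q]\cong B_2$ in $\zip(P,z)$.

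The main obstacle will be the middle-element case for thinness: one has to verify that thinness of $P$, applied to a rank-$3$ interval, together with the very specific local structure imposed by the zipper conditions (uniqueness of coverers of $z$, equality of downsets of $x$ and $y$), leaves exactly one non-zipper element strictly between $p$ and $q$. The delicate point is ruling out spurious elements $r$ with, say, $x<r<q$ but $r\not>y$ and $r\not\le z$; here condition (iii) is not directly applicable since it only controls strict lower sets of $x$ and $y$, so one must leverage thinness of $P$ more carefully, analyzing the rank-$2$ sub-intervals $[x,q]$ and $[y,q]$ of $P$ and matching their atoms via $z$. Once this local count is done, thinness of $\zip(P,z)$ follows uniformly, and the proof is complete.
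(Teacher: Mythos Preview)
The paper does not prove this proposition; it is quoted verbatim as a result of Reading with a citation and no argument. So there is no proof in the paper to compare against, and your proposal is necessarily a different route simply by virtue of being an actual argument.

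That said, your proposal contains a concrete error that breaks the later steps. You claim that $w\prec q$ in $\zip(P,z)$ if and only if $z\prec q$ in $P$. This is impossible on rank grounds alone: since $\rk_{\zip(P,z)}(w)=\rk_P(z)-1$, the relation $w\prec q$ forces $\rk_P(q)=\rk_P(z)$, whereas $z\prec q$ forces $\rk_P(q)=\rk_P(z)+1$. The two conditions are in fact mutually exclusive. The correct description, coming straight from clause~(b) of the definition of $\zip(P,z)$, is that $w\prec q$ if and only if $q\ne z$ and ($x\prec q$ or $y\prec q$) in $P$; condition~(ii) is not what governs the upward covers of $w$. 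Your justification (``$x<q$ and $y<q$ together force $z\le q$'') also implicitly reads clause~(b) as an ``and'' rather than an ``or''.

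This error propagates. In your thinness analysis for the case $w=p$, you plan to translate upward covers of $w$ into upward covers of $z$, which is the wrong dictionary. In the middle case $p\prec w\prec q$, you conclude $z\prec q$ in $P$ and then work in the rank-$3$ interval $[p,q]$ of $P$; but in fact $[p,q]$ has rank $2$ in $P$, and the relevant information is that $q\ne z$ covers $x$ or $y$. The repair is to redo the upward-cover analysis with the correct characterization, after which the case analysis for thinness goes through (the interval $[p,q]$ in $P$ is already $B_2$, with one of its middle elements being $x$ or $y$, and the other surviving to witness thinness in $\zip(P,z)$). The downward-cover description you gave for $p\prec w$ is correct.
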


\begin{theorem}[{Reading \cite[Theorem 4.7]{reading}}]
\label{zips preserve PL-sphere}
	If the order complex~$\Delta(P\wout\{\zerohat,\onehat\})$ of the proper
	part of~$P$ is a PL-sphere and~$x,y\prec z$ form a zipper then
	the order complex~$\Delta(\zip(P,z)\wout\{\zerohat,\onehat\})$ is a
	PL-sphere as well.
\end{theorem}

Now we recall the definition of the \cv\dv-index. For a further discussion
of the {\bf cd}-index see the survey by Bayer \cite{bayer}.

Let~$P$ be a graded poset of rank~$n+1$ with~$\widehat{0}$ and~$\widehat{1}$.
Let~${\bf a}$ and~${\bf b}$ be noncommuting variables of degree~1.
For each chain~$C=\{\widehat{0}<x_1<\dots<x_k<\widehat{1}\}$
in~$P$ define a weight~$w(C)=w_1\cdots w_n$
by \[w_i=\begin{cases}{\bf b}&\text{if there is a rank~$i$ element in }C,\\
{\bf a}-{\bf b}&\text{otherwise.}\end{cases}\]
The~\emph{{\bf ab}-index} of~$P$ is the polynomial
\[\Psi(P)=\sum_{C}w(C),\]
where
the sum above is over all chains~$C=\{\widehat{0}<x_1<\dots<x_k<\widehat{1}\}$.
Define noncommutative variables~${\bf c}={\bf a}+{\bf b}$ and~${\bf d}={\bf ab}+{\bf ba}$
of degree~1 and~2, respectively.
If the {\bf ab}-index of~$P$ can be expressed in terms of~${\bf c}$ and~${\bf d}$,
the resulting polynomial is called the \emph{{\bf cd}-index} of~$P$
and is also denoted~$\Psi(P)$.
Not every graded poset has a {\bf cd}-index, but every Eulerian poset has a {\bf cd}-index
\cite[Theorem 4]{newindex}.

Reading gave a expression for how zipping operations affect the~{\bf cd}-index.
We will use this result to derive inequalities between~{\bf cd}-indices of
minor posets in \Cref{minor poset cd-index inequalities}
and \Cref{strong map cd-index inequalities}.

\begin{theorem}
\label{zip cd-index inequalities}
	Let~$P$ be an Eulerian poset and suppose~$x,y\prec z$ form a zipper in~$P$.
	\begin{enumerate}
		\item[(a)]{
		({Reading \cite[Theorem 4.6]{reading}})
		If~$z\ne\widehat{1}_P$ then
		\[\Psi(\zip(P,z))=\Psi(P)-\Psi([\widehat{0},x]_P)\cdot {\bf d}\cdot
		\Psi([z,\widehat{1}]_P).\]
		}
		\item[(b)]
		{({Stanley \cite[Lemma 1.1]{s-shellable}})
		If~$z=\widehat{1}_P$ then\[\Psi(\zip(P,z))=\Psi(P)\cdot{\bf c}.\]
		}
	\end{enumerate}
\end{theorem}

In order to deduce the \cv\dv-index inequalities in \Cref{minor poset cd-index inequalities}
and \Cref{strong map cd-index inequalities} from \Cref{zip cd-index inequalities},
we must establish that the \cv\dv-indices involved have nonnegative coefficients.
Nonnegativity of the \cv\dv-index was proved for Gorenstein* posets
by Karu. In particular, a poset for which the order complex of its proper part is a PL-sphere
is a Gorenstein* poset.

\begin{theorem}[{Karu \cite[Theorem 1.3]{karu}}]
\label{cd-index nonnegativity}
	If~$P$ is a Gorenstein* poset then the coefficients
	of the {\bf cd}-index~$\Psi(P)$ are nonnegative.
\end{theorem}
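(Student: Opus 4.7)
The plan is to follow Karu's approach via combinatorial intersection cohomology, as no elementary proof of this statement in full generality is known. For a Gorenstein* poset~$P$ of rank~$n+1$, one constructs by induction on rank a sheaf~$\mathcal{L}$ of finitely generated graded modules over a polynomial ring~$A=\mathbb{R}[t_1,\dots,t_n]$, whose stalk~$\mathcal{L}_x$ at~$x\in P$ is meant to model the intersection cohomology complex on a toric variety associated to the lower interval~$[\widehat{0},x]$, even though no such variety is actually present. Concretely, having built~$\mathcal{L}$ on the proper lower interval~$[\widehat{0},x)$, one forms the pullback~$\mathcal{L}_{<x}$ and defines~$\mathcal{L}_x$ as the minimal free graded extension whose reduction modulo~$\mathfrak{m}=(t_1,\dots,t_n)$ is concentrated in degrees strictly less than~$\rk(x)/2$. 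The total space~$IH(P):=\mathcal{L}_{\widehat{1}}/\mathfrak{m}\mathcal{L}_{\widehat{1}}$ is then a graded vector space whose graded Hilbert function refines the flag~$f$-vector of~$P$, and hence determines the~{\bf cd}-index~$\Psi(P)$.

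Using the Gorenstein* hypothesis---which guarantees that the order complex~$\Delta(P')$ is a real homology sphere, so in particular the top of~$IH(P)$ is one-dimensional---one establishes a Poincar\'e pairing~$IH(P)_k\times IH(P)_{n-k}\to\mathbb{R}$ that is nondegenerate for every~$k$. This duality is the combinatorial shadow of the fact that the underlying object is ``self-dual of dimension~$n$'', and it would already suffice to recover the Dehn--Sommerville relations encoded in the existence of a~{\bf cd}-index; the content that goes further is what follows.

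The central and hardest step is to prove a hard Lefschetz theorem: there exists a degree~1 element~$\ell\in A$ (a combinatorial analogue of an ample class) such that multiplication by~$\ell^{n-2k}$ induces an isomorphism~$IH(P)_k\to IH(P)_{n-k}$ for each~$k$. This is the main obstacle. Karu's strategy is induction on~$\rk(P)$ combined with a relative hard Lefschetz for the inclusion of each lower interval, threaded through a deformation argument that perturbs degenerate cases into generic ones. The reason one must work in the combinatorial~$IH$ framework rather than transport from a toric variety is precisely that arbitrary Gorenstein* posets need not arise from any fan or polytope, so the usual algebro-geometric hard Lefschetz is unavailable.

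Granting hard Lefschetz, one concludes as follows. The primitive Lefschetz decomposition~$IH(P)=\bigoplus_{k}\ell^{k}\cdot\mathrm{Prim}^{k}$ exhibits each graded piece as a direct sum of~$\mathbb{R}$-vector spaces of nonnegative dimension. Translating this via the Ehrenborg--Readdy coproduct formalism for the~{\bf cd}-index---which expresses each coefficient of~$\Psi(P)$ as a manifestly nonnegative integer combination of the primitive Betti numbers~$\dim\mathrm{Prim}^{k}$---yields that every coefficient of~$\Psi(P)$ is nonnegative. All of the difficulty is concentrated in setting up~$\mathcal{L}$ robustly on an arbitrary poset and in proving hard Lefschetz; once those are in place, the deduction of the coefficientwise inequality is essentially bookkeeping.
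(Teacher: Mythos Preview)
The paper does not prove this theorem. It is quoted with attribution to Karu and used as a black box in the corollaries that follow; there is no argument in the paper to compare your sketch against.

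As for the sketch on its own terms: the overall framework you invoke---combinatorial intersection cohomology sheaves in the sense of Barthel--Brasselet--Fieseler--Kaup and Bressler--Lunts, Poincar\'e duality, and a hard Lefschetz theorem---is indeed the machinery behind Karu's result. However, the final deduction you give is not how the argument actually closes. The primitive Lefschetz decomposition yields nonnegativity of the toric $g$-vector, not directly of the {\bf cd}-coefficients, and there is no Ehrenborg--Readdy formula expressing each {\bf cd}-coefficient as a nonnegative combination of primitive Betti numbers. Karu's actual proof (in \emph{The cd-index of fans and posets}) proceeds differently at this stage: he defines sheaf-theoretic operations that mirror left multiplication by ${\bf c}$ and ${\bf d}$ and shows, by an inductive decomposition, that the coefficient of each {\bf cd}-monomial is the dimension of an explicitly constructed graded vector space. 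So your last paragraph conflates the $g$-theorem mechanism with the {\bf cd}-index mechanism; the gap is real, even if the earlier setup is in the right spirit.
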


\end{document}